\newtheorem{theorem}{Theorem}[section]
\newtheorem{proposition}[theorem]{Proposition}
\newtheorem{lemma}[theorem]{Lemma}
\newtheorem{corollary}[theorem]{Corollary}
\newtheorem{definition}[theorem]{Definition}
\newtheorem{question}[theorem]{Question}
\newtheorem{example}[theorem]{Example}
\newtheorem{remark}[theorem]{Remark}
\newtheorem*{lemma*}{Lemma}
\def\part#1{\bigbreak\noindent{\textsc{#1}}\medbreak}
\newcommand{\foot}{\setcounter{footnote}{0}\footnote}
\def\itm{\smallskip\noindent$\bullet$\ \,}
\def\qed{\hfill$\square$}
\newcommand{\itb}{\bfseries\itshape}
\newcommand{\tabletop}{\centering
\begin{tabular}{c c l c c c l c c c l} 
\hline \\ [-4ex]
&&&\vline &&&&\vline&&& \\
$n$ && $q$  &\vline & $n$ && $q$ &\vline & $n$ && $q$ \\ 
&&&\vline &&&&\vline&&& \\ [-1.3ex]
\hline \\ [-4ex]}
\def\bn{\mathbb N}
\def\bz{\mathbb Z}
\def\bq{\mathbb Q}
\def\br{\mathbb R}
\def\ba{\mathbb A}
\def\bo{\mathbb O}
\def\calp{\mathcal P}
\def\o{\frak o}
\def\oh{\widehat\o}
\def\rk{\textup{\rm rk\,}}
\def\coker{\textup{\rm coker}}
\def\lk{\ell k}
\def\br#1{\langle #1 \rangle}
\def\st{\ |\ }
\def\odd{\textup{\footnotesize odd }}
\def\medcirc{\raisebox{1.5pt}{$\scriptstyle\bigcirc$}}
\def\tor#1{\textup{Tor}_{#1}}
\def\mil#1{\mu_{#1}}
\def\mas#1{\omega_{#1}}
\def\les#1{\lambda_{#1}}
\def\lf#1{\phi_{#1}} 
\def\leg#1#2{(#1\,|\,#2)}
\def\res#1#2{[#1\,|\,#2]}
\def\mubar{\ov\mu}
\def\ov{\overline}
\begin{document}

\vskip-.3in
\vskip-.1in

\title{The Milnor degree of a $3$-manifold}
\author{Tim Cochran and Paul Melvin}
\date{}
\thanks{* Both authors are supported by grants from the National Science Foundation.}

\begin{abstract}
The Milnor degree of a 3-manifold is an invariant that records the maximum simplicity, in terms of higher order linking, of any link in the 3-sphere that can be surgered to give the manifold.  This invariant is investigated in the context of torsion linking forms, nilpotent quotients of the fundamental group, Massey products and quantum invariants, and the existence of 3-manifolds with any prescribed Milnor degree and first Betti number is established. 

Along the way, it is shown that the number $M_k^r$ of linearly independent Milnor invariants of degree $k$, for $r$-component links in the $3$-sphere whose lower degree invariants vanish, is positive except in the classically known cases (when $r=1$, and when $r=2$ with $k=2$, $4$ or $6$). 
\end{abstract}

\maketitle

\setcounter{section}{-1}

\vskip-.3in
\vskip-.1in

\parskip 4pt

\section{Introduction}  

This paper initiates a study of the Milnor degree, a 3-manifold invariant introduced by the authors in \cite{CM1}.  The definition is recalled and motivated below.  

All $3$-manifolds considered here will be closed, connected and oriented.  Any such manifold can be constructed by integral surgery on a framed link $L$ in the $3$-sphere $S^3$, written $S^3_L.$  Indeed there are infinitely many choices for the link $L$, and so in studying a given $3$-manifold, it is natural to seek the simplest ones.  But in what sense simplest?  One measure of the complexity of a link is its linking matrix, or more generally its set of higher order linking numbers, a.k.a.\ Milnor's $\bar\mu$-invariants \cite{M2}.  

So first define the {\itb\boldmath Milnor degree $\mu_L$ of a link $L$} in the $3$-sphere to be the degree of its first nonvanishing $\bar\mu$-invariant.  Here {\it degree} means {\it length minus one}, so the pairwise linking numbers are degree one.  If all of the $\bar\mu$-invariants vanish, as for knots or more generally boundary links, then the link is said to have infinite Milnor degree.  Thus the link invariant $\mu$ takes values in $\bn\cup\{\infty\}$.  Note that higher Milnor degree for a link indicates greater similarity with the unlink, and so in some sense greater simplicity.  For example the Hopf link has Milnor degree one, since the components have nonzero linking number, whereas the link obtained by Whitehead doubling both components of the Hopf link is a boundary link, and so has infinite Milnor degree.  Links with arbitrary finite degree can be constructed by repeatedly Bing doubling the Hopf link; see Figure 7 in \cite{M1}, and \S8 in \cite{C4}.  

Now define the {\itb\boldmath Milnor degree $\mil M$ of a $3$-manifold $M$} to be the supremum of the degrees of all possible links that can be surgered to give $M$,
$$
\mil{M} \ = \ \sup\,\{\mil{L} \st L\subset S^3 \textup{ with } M=S^3_L\} \ \in \ \bn\cup\{\infty\}.
$$

This complexity measure for 3-manifolds first arose in the authors' study of cyclotomic orders of quantum $SO(3)$-invariants at prime levels \cite{CM1}, and also appears as a measure of computational complexity for the quantum $SU(2)$-invariants at the fourth root of unity \cite{KM1}.  As it turns out, this connection with quantum topology provides a powerful tool for analyzing the Milnor degree.

As with other topological invariants defined in a similar fashion -- such as Heegaard genus or the surgery number -- the Milnor degree is hard to compute.  Indeed its value is unknown for many 3-manifolds, including even some lens spaces.  It will be seen however that in some situations, especially in the absence of homological torsion, classical techniques from algebraic topology can be brought to bear on this computation.   

In section 1 the manifolds of Milnor degree one are completely characterized in terms of their torsion linking forms.  In particular, the case when the first homology of the manifold is cyclic (homology lens spaces) is discussed in some detail.  In some circumstances one can also identify the manifolds of infinite degree in terms of their linking forms.  For example, it will be seen that if the manifold has prime power order first homology and is not of Milnor degree one, then it must have infinite Milnor degree.

In section 2, the Milnor degree of a 3-manifold with torsion free homology is related to the lower central series of its fundamental group, and consequently to its cohomological {\itb Massey degree}.  The Massey degree is known to be algorithmically computable, but it is difficult to calculate in practice.  Nevertheless, 3-manifolds with any given Massey degree are easily constructed and it follows that the Milnor degree assumes all values in $\bn\cup\{\infty\}$.   Building on the result mentioned in the abstract on the number of independent Milnor invariants of links (Lemma~\ref{lem:numbertheory}, proved in Appendix B), this section includes some realization results for the Milnor degree of manifolds with prescribed first homology.

It is a more difficult task to compute the Milnor degree in the presence of torsion, although Massey products can still be of some help. This problem is tackled in section 3 using quantum topology techniques.  This leads to a construction of rational homology spheres of arbitrary Milnor degree, and more generally, $3$-manifolds of arbitrary Milnor degree with any prescribed first Betti number.

Before embarking on a detailed analysis of the Milnor degree of $3$-manifolds, we make a few basic observations.  

\itm
The Milnor degree is invariant under change in orientation, that is
$
\mil{M} = \mil {\ov M}
$ 
where $\ov M$ is $M$ with the opposite orientation:  If $M$ is surgery on $L$, then $\ov M$ is surgery on the mirror image $\ov L$ with negated framings, and clearly $\mil {L}=\mil{\ov L}$. 

\itm 
The Milnor degree of a connected sums satisfies the inequality 
$
\mil {M\#M'} \ge \min(\mil{M},\mil{M'}).
$ 
For if $M$ and $M'$ are surgery on $L$ and $L'$, then $M\# M'$ is surgery on the split union $L\sqcup L'$, and clearly $\mil {L\sqcup L'} = \min(\mil{L},\mil{L'})$.  

Note that this inequality need not be an equality.  For example the connected sum $L(8,5)\#L(5,8)$ of lens spaces is $40$ surgery on the $(8,5)$-torus knot \cite{Mo}, and so of infinite degree, whereas both lens spaces are of degree 1, as will be seen in the next section.  As a consequence, realization results for the Milnor degree are subtler than one might first suspect; see Corollary~\ref{cor:connsum} and (the proof of) Theorem~\ref{thm:realization2} for upper bounds on $\mil{M\#M'}$.     

\itm There exist $3$-manifolds of infinite Milnor degree with any prescribed first homology $$\bz^r\oplus \bz_{n_1}\oplus\cdots\oplus\bz_{n_k},$$ for example
$
\#^rS^1\times S^2\,\#\,L(n_1,1)\,\#\,\cdots\,\#\,L(n_k,1),
$
obtained by surgery on an unlink with framings $0,\dots,0,n_1,\dots,n_k$.

\itm
All integral homology spheres have infinite Milnor degree.  This follows from the well known fact that they are all constructible by surgery on boundary links in the $3$-sphere, or it can be deduced from the following more general statement.

\itm
The Milnor degree is {\itb homological} in the sense that it can be defined using any integral homology sphere $\Sigma$ in place of $S^3$.  In other words $\mil M$ does not depend on which homology sphere $\Sigma$ is used as the base manifold, that is
$$
\mil{M} = \sup\{\mil{L} \st L\subset \Sigma \textup{ with } M=\Sigma_L\} \quad\textup{for {\it any} integral homology sphere $\Sigma$}
$$
where $\Sigma_L$ denotes the result of surgery on the framed link $L\subset \Sigma$, and $\mil{L}$ is the degree of the first non-vanishing $\bar\mu$-invariant of $L$ in $\Sigma$ (see section $2$ for a discussion of Milnor invariants in arbitrary integral homology spheres).   This is proved in Appendix A using the work of Habegger and Orr; cf.\ the proof of 6.1 in \cite{HO}.  

\vskip -.2in

\section{Manifolds of degree one}   

In this section, classical results from the theory of quadratic forms are used to characterize all 3-manifolds of Milnor degree one, and some of infinite degree, in terms of their torsion linking forms.

\part{Linking forms}

The {\itb linking form} of a $3$-manifold $M$ is the non-degenerate form 
$$
\lf M\colon \tor M\otimes \tor M \to \bq/\bz
$$
on the torsion subgroup $\tor M$ of $H_1(M)$ defined by $\lf M(a\otimes b) = \alpha\cdot\tau/n$, where $\alpha$ is any $1$-cycle representing $a$, and $\tau$ is any $2$-chain bounded by a positive integral multiple $n\beta$ of a 1-cycle $\beta$ representing $b$.  

If $M$ is surgery on a framed link $L$, then $\lf M$ is computed from the (integer) linking matrix $A$ of $L$, with framings on the diagonal, as follows.  First change basis (pre and post multiply by a unimodular matrix and its transpose) to transform $A$ into a block sum $\bo\oplus \ba$, where $\bo$ is a zero matrix and $\ba$ is {\itb nonsingular} (meaning invertible over $\bq$, i.e.\  having nonzero determinant).\foot{\,To see how this is done when $A$ is singular, start with a primitive vector $v$ in $\bz^n$ with $Av=0$, and complete this to a basis for $\bz^n$.  Using these basis vectors as the columns of a matrix $P$, we have $P^TAP = \bo_1\oplus \ba_1$ where $\bo_1$ is the $1\times 1$ zero matrix and null($\ba_1)=$ null($A)-1$.  The argument is completed by induction on the nullity of $A$.}  This corresponds to a sequence of handleslides in the Kirby calculus \cite{K}, transforming $L$ into $L_{\bo}\cup L_{\ba}$.  Now following Seifert \cite{Se}, the linking form $\lf M$ is presented by the (rational) matrix $\ba^{-1}$ with respect to the generators of $\tor M$ given by the meridians of the components of $L_{\ba}$.

The purely algebraic procedure just described associates with any symmetric integer matrix $A$ a non-degenerate linking form $\lf A$ on the torsion subgroup of $\coker(A)$, presented by $\ba^{-1}$ where $A \sim \bo\oplus\ba$ with $\ba$ nonsingular as above.  

It is a classical fact that if $A$ is nonsingular, then it can be recovered up to {\itb stable equivalence} from the isomorphism class of its linking form.  (Stable equivalence allows change of basis and block sums with diagonal matrices of $\pm1$'s; the former correspond to {\it handleslides} and the latter to {\it blow ups} in the Kirby calculus.)  This was proved by Kneser and Puppe \cite{kp} for the case when $\det(A)$ is odd, and in general by Durfee and Wilkens in their 1971 theses, later simplified by Wall \cite{Wa3} and Kneser (see Durfee \cite[Theorem 4.1]{Du}).  

For a singular matrix, one needs both its linking form and its nullity to recover its stable equivalence class.  This can be seen by changing basis to transform the matrix into the form $\bo\oplus\ba$ with $\ba$ nonsingular, as above, and then appealing to the nonsingular case.  Although this fact is presumably well-known, we have not been able to find a proof in the literature, and so credit it to ``folklore":

\begin{theorem} \label{thm:classical} \hskip-.1in {\textup{(Folklore)}} Two symmetric integer matrices are stably equivalent if and only if they have the same nullity and isomorphic linking forms.  
\end{theorem}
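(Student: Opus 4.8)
The plan is to reduce everything to the nonsingular case recalled above by splitting off a zero block whose size equals the nullity, and then to invoke that classical result.

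\emph{The ``only if'' direction.} Here I would simply verify that the two moves generating stable equivalence preserve both invariants. A change of basis $A\mapsto P^TAP$ with $P$ unimodular is invertible over $\bq$, so it preserves the nullity; moreover it induces a group isomorphism $\coker(A)\cong\coker(P^TAP)$ under which the presentation matrix $\ba^{-1}$ is carried to a congruent one, and hence $\lf A$ is preserved up to isomorphism. A blow up, i.e.\ a block sum with $\langle\pm1\rangle$, is nonsingular and has vanishing cokernel, so it alters neither the nullity nor the torsion subgroup of the cokernel, and therefore fixes $\lf A$ up to isomorphism as well. Thus stably equivalent matrices have equal nullity and isomorphic linking forms.

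\emph{The ``if'' direction.} This is the substantive half. Suppose $A$ and $B$ have common nullity $n$ and $\lf A\cong\lf B$. Using the primitive-vector induction of the footnote above, I would first choose unimodular congruences realizing $A\sim\bo\oplus\ba$ and $B\sim\bo\oplus\mathbf B$, where $\bo$ is the $n\times n$ zero matrix and $\ba,\mathbf B$ are nonsingular integer matrices. Since the zero summand contributes only a free group $\bz^n$ to the cokernel, it affects neither the torsion subgroup nor the linking form; combined with the congruence-invariance established above, this gives $\lf{\ba}\cong\lf A\cong\lf B\cong\lf{\mathbf B}$. Now $\ba$ and $\mathbf B$ are nonsingular with isomorphic linking forms, so the classical nonsingular case applies and they are stably equivalent: there are diagonal $\pm1$ matrices $D,D'$ with $\ba\oplus D\sim\mathbf B\oplus D'$. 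Forming the block sum with $\bo$ on each side and splicing in the two congruences above yields
$$
A\oplus D\ \sim\ \bo\oplus\ba\oplus D\ \sim\ \bo\oplus\mathbf B\oplus D'\ \sim\ B\oplus D',
$$
which is precisely the assertion that $A$ and $B$ are stably equivalent.

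The one point demanding care is that the splitting $A\sim\bo\oplus\ba$ must be realized by an \emph{integral} unimodular congruence, not merely a rational one, so that $\ba$ is a genuine integer matrix to which the nonsingular case applies; this is exactly what the primitive-vector argument in the footnote guarantees. Granting the nonsingular result, the remainder is routine bookkeeping with block sums.
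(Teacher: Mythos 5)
Your proof is correct and follows exactly the route the paper indicates: split off a zero block of size equal to the nullity via an integral congruence (the primitive-vector induction of the footnote), reduce to the nonsingular case of Kneser--Puppe/Durfee--Wilkens/Wall, and check for the converse that basis changes and blow ups preserve both invariants. The paper compresses this reduction into a single sentence, so your write-up is just a fuller version of the same argument.
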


Needless to say, this theorem has a number of consequences regarding surgery presentations of $3$-manifolds, many of which are presumably known in some form to experts in the field: 
  
\begin{corollary} \label{cor:classical} Let $M$ be a $3$-manifold, and $A$ be a symmetric integer matrix with nullity equal to the first Betti number $r$ of $M$ and with linking form $\lf A \cong \lf M$.  Then $M$ can be constructed by surgery on a link with linking matrix $A$ in an integral homology sphere.
\end{corollary}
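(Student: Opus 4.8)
The plan is to produce the desired surgery description by starting from \emph{any} surgery presentation of $M$ over $S^3$ and then modifying its linking matrix, through moves that preserve the surgered manifold, until it takes the prescribed form $A$ — at the cost of passing from $S^3$ to a homology sphere. First I would choose a framed link $L\subset S^3$ with $M=S^3_L$, and let $B$ be its linking matrix. Since $H_1(M)\cong\coker(B)$, the nullity of $B$ equals the first Betti number $r$ of $M$, and by Seifert's presentation recalled above $\lf B\cong\lf M$. Combining this with the hypotheses on $A$ shows that $A$ and $B$ have equal nullity and isomorphic linking forms, so Theorem~\ref{thm:classical} gives that $A$ and $B$ are \emph{stably equivalent}: there are diagonal $\pm1$ matrices $D$ and $D'$ and a unimodular $P$ with $P^T(B\oplus D)P = A\oplus D'$.

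Next I would realize this algebraic equivalence geometrically. Blowing up $L$ — that is, adding a split $\pm1$-framed unknot for each diagonal entry of $D$ — yields a link with linking matrix $B\oplus D$ that still surgers to $M$. The congruence by $P$ is then carried out by handleslides: the elementary transvections $I\pm e_{ij}$ are realized by sliding one component over another, and sign changes by reorienting components, and together these moves realize an arbitrary unimodular congruence $X\mapsto P^TXP$. Since handleslides and blow ups leave the surgered $3$-manifold unchanged, the resulting link $L''$ with linking matrix $A\oplus D'$ again satisfies $S^3_{L''}=M$.

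Finally I would split off a homology sphere. The block decomposition $A\oplus D'$ exhibits $L''$ as a union $L''=L_A\sqcup L_0$ whose two halves have linking matrices $A$ and $D'$ and zero mutual linking numbers. Surgering on $L_0$ alone produces $\Sigma:=S^3_{L_0}$, and since $H_1(\Sigma)\cong\coker(D')=0$ (as $D'$ is unimodular), $\Sigma$ is an integral homology sphere containing $L_A$, with $\Sigma_{L_A}=S^3_{L''}=M$. It remains to check that $L_A$ has linking matrix exactly $A$ in $\Sigma$, and this is the one genuinely nontrivial step: a vanishing off-diagonal block only records that the pairwise linking numbers between $L_A$ and $L_0$ are zero, not that $L_A$ and $L_0$ are geometrically split, so higher-order linking between the two halves could a priori disturb the linking numbers within $L_A$ after surgery. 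The resolution is the Schur-complement (generalized blow-down) formula: surgery on $L_0$ transforms the linking matrix of the residual link $L_A$ from $A$ into $A - 0\cdot (D')^{-1}\cdot 0 = A$, the correction term vanishing precisely because the off-diagonal block is zero. Thus $L_A\subset\Sigma$ has linking matrix $A$ and surgers to $M$, as required.
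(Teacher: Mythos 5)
Your proof is correct and follows essentially the same route as the paper: invoke Theorem~\ref{thm:classical} to get stable equivalence with the linking matrix of some surgery presentation over $S^3$, realize the stabilization and basis change by blow ups and handleslides, and then surger the unimodular block to obtain the homology sphere. The only difference is that you explicitly verify, via the Schur-complement formula for linking numbers after surgery, that the residual link still has linking matrix $A$ in the new homology sphere — a point the paper's proof asserts without comment, and a worthwhile check since the two halves need not be geometrically split.
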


\vskip-.2in

\begin{proof}
Suppose that $M$ is surgery on a framed link $L'$ in $S^3$ with linking matrix $A'$.  Then $A'$ has nullity $r$ and $\lf A' \cong \lf M$, and so by the theorem, $A$ and $A'$ are stably equivalent.  Thus a basis change (as above) will transform $A'\oplus D'$ into $A\oplus D$ for suitable unimodular diagonal matrices $D$ and $D'$.  Letting $U'$ be an unlink far away from $L'$ with linking matrix $D'$ (so $M=S^3_{L'\cup U'}$), this means that $L'\cup U'$ can be transformed by handleslides to a link of the form $L\cup U$ with linking matrix $A\oplus D$.  Note that $U$ need not be an unlink, but it does have a unimodular linking matrix $D$, and so $N = S^3_U$ is a homology sphere containing the link $L$ with linking matrix $A$, and $M=N_L$, as desired. 
\end{proof}

Linking forms on finite abelian groups have been classified.  They decompose, albeit non-uniquely, as orthogonal sums of forms on cyclic groups and on certain non-cyclic 2-groups ~\cite{kk}\cite{Wa2}.  The form on the cyclic group $\bz_n$ with self-linking $q/n$ on a generator will be denoted by $(q/n)$.  Note that $q$ must be relatively prime to $n$, since the form is non-degenerate, and that 
$$
(q/n)\cong(q'/n) \ \ \iff \ \ q'\equiv k^2q\pmod n
$$
for some $k$ relatively prime to $n$.    

Any form isomorphic to $(\pm1/n)$ for some $n\ge1$ will be called {\itb simple}.  This includes the trivial form on $\bz_1=0$.   A direct sum of simple forms will be called {\itb semisimple}.  But beware, such forms might also be sums of non-simple or even non-semisimple forms, for example
$$
(1/40) \ \cong \ (2/5)\oplus(-3/8).
$$
In terms of their associated stable equivalence classes of symmetric integer matrices, semisimple forms correspond to diagonal matrices, and the simple forms correspond to diagonal matrices with at most one nonzero entry.  

Now observe that surgery on any {\itb diagonal framed link} in $S^3$ (meaning its linking matrix is diagonal, i.e.\ pairwise linking numbers vanish) can be performed in two stages:  first surger the sublink of $\pm1$-framed components, giving an integral homology sphere, and then surger the remaining components.  With this perspective, Corollary~\ref{cor:classical} has the following immediate consequence.   

\begin{proposition}\label{prop:forms} The linking form of a $3$-manifold $M$ is semisimple if and only if $M$ can be obtained by surgery on a diagonal link $L$ in an integral homology sphere, and is simple if and only there is such a link $L$ with at most one nonzero framing.  
\end{proposition}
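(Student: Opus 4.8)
The plan is to deduce both equivalences from Corollary~\ref{cor:classical} together with the algebraic dictionary recorded just above the proposition: a nondegenerate form is semisimple exactly when it is isomorphic to $\lf A$ for some diagonal symmetric integer matrix $A$, and simple exactly when $A$ may be taken with at most one nonzero diagonal entry. The key preliminary is to upgrade Corollary~\ref{cor:classical} to a biconditional: a $3$-manifold $M$ is surgery on a link with linking matrix $A$ in an integral homology sphere if and only if $A$ has nullity equal to the first Betti number $b_1(M)$ and $\lf A\cong\lf M$. The ``if'' is exactly Corollary~\ref{cor:classical}; the ``only if'' holds because surgery on a framed link $L$ with linking matrix $A$ in a homology sphere gives $H_1(M)\cong\coker(A)$ (so the nullity of $A$ equals the free rank $b_1(M)$) with linking form presented by $\ba^{-1}$, i.e.\ $\lf M\cong\lf A$.

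Granting this biconditional, I would argue formally. For the semisimple claim: if $\lf M$ is semisimple, choose a diagonal $A$ with $\lf A\cong\lf M$ and pad it with $b_1(M)$ zero diagonal entries so that its nullity matches $b_1(M)$; the biconditional then realizes $M$ as surgery on a link with this diagonal linking matrix --- a diagonal link --- in a homology sphere. Conversely, if $M$ is surgery on a diagonal link $L$ in a homology sphere, its linking matrix $A$ is diagonal, so $\lf M\cong\lf A$ is semisimple. The simple claim is identical, replacing ``diagonal'' by ``diagonal with at most one nonzero entry'' on the matrix side and ``diagonal link'' by ``diagonal link with at most one nonzero framing'' topologically; the single possibly nonzero framing $\pm n$ produces the simple summand $(\pm1/n)$.

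The one step that deserves care, and which I expect to be the main obstacle, is the ``only if'' half of the biconditional when the ambient manifold is a homology sphere $\Sigma$ rather than $S^3$, namely that surgery on a link $L\subset\Sigma$ with (diagonal) linking matrix $A$ has linking form $\lf A$, since the Seifert computation was set up over $S^3$. To see this I would reduce to the $S^3$ case already in the text. Write $\Sigma=S^3_J$ for a framed link $J$ with unimodular symmetric linking matrix $B$, so that $M=S^3_{J\cup L}$ has linking matrix $\bigl(\begin{smallmatrix} B & P\\ P^T & A'\end{smallmatrix}\bigr)$, where $A'$ records the self-linkings of $L$ in $S^3$ and $P$ the linkings between $J$ and $L$. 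Since $B$ is unimodular, $B^{-1}$ is integral, so the integral congruence by $\bigl(\begin{smallmatrix} I & -B^{-1}P\\ 0 & I\end{smallmatrix}\bigr)$ block-diagonalizes this matrix as $B\oplus(A'-P^TB^{-1}P)$. The Schur complement $A=A'-P^TB^{-1}P$ is exactly the linking matrix of $L$ in $\Sigma$, and it is integral and diagonal under our hypotheses; as $B$ is unimodular its form $\lf B$ is trivial, whence $\lf M\cong\lf{B\oplus A}\cong\lf A$. This confirms $\lf M\cong\lf A$ and closes the argument.
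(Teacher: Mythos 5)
Your proof is correct and takes essentially the same route as the paper: the proposition is stated there as an immediate consequence of Corollary~\ref{cor:classical} together with the dictionary between (semi)simple forms and diagonal matrices (with at most one nonzero entry), which is exactly your argument. The Schur-complement verification that surgery on a link with linking matrix $A$ in a homology sphere has linking form $\lf A$ is a step the paper leaves implicit, and your explicit treatment of it is sound but does not constitute a different method.
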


By definition, a $3$-manifold is of degree greater than one if and only if it can be obtained by surgery on a diagonal link.  Hence the proposition yields the following characterization of manifolds of degree one.

\begin{corollary}\label{cor:deg1}
The $3$-manifolds of Milnor degree one are exactly those with non-semisimple linking forms.
\end{corollary}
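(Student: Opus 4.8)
The plan is to establish the single biconditional $\mil M > 1 \iff \lf M$ is semisimple, and then read off the corollary by negating both sides. Since Proposition~\ref{prop:forms} already supplies the right-hand equivalence with diagonal surgery, the real work is in carefully connecting $\mil M > 1$ to the existence of a diagonal presentation.

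First I would unwind the definition of Milnor degree. The degree-one $\bar\mu$-invariants of a link $L = L_1\cup\cdots\cup L_r$ are exactly the pairwise linking numbers $\lk(L_i,L_j)$ with $i\neq j$, i.e.\ the off-diagonal entries of the linking matrix. Hence $\mil L > 1$ if and only if all of these vanish, which is to say $L$ is a diagonal link. Because every link satisfies $\mil L \ge 1$ and $\mil M$ is the supremum of $\mil L$ over all presentations $M = S^3_L$, the supremum exceeds $1$ exactly when some presenting link has $\mil L \ge 2$; thus $\mil M > 1$ precisely when $M$ admits a diagonal presentation.

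Next I would pass from $S^3$ to an arbitrary integral homology sphere so as to match the hypotheses of Proposition~\ref{prop:forms}. By the homological property of the Milnor degree (the final observation of the Introduction, proved in Appendix A), surgery on a diagonal link $L$ in any integral homology sphere $\Sigma$ yields a manifold with $\mil M \ge 2$, since such an $L$ has vanishing pairwise linking numbers in $\Sigma$ and therefore $\mil L \ge 2$; conversely a diagonal link in $S^3$ is in particular a diagonal link in a homology sphere. Hence $\mil M > 1$ if and only if $M$ is surgery on a diagonal link in some integral homology sphere, and Proposition~\ref{prop:forms} identifies this last condition with the semisimplicity of $\lf M$.

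Combining these observations gives $\mil M > 1 \iff \lf M$ semisimple. Since $\mil M \ge 1$ for every closed oriented $3$-manifold (the supremum of a nonempty set of values each at least $1$), the logical negation yields $\mil M = 1 \iff \lf M$ non-semisimple, which is the assertion. The only point requiring genuine care is the reconciliation of the $S^3$-based definition of $\mil M$ with the homology-sphere formulation of Proposition~\ref{prop:forms}; this is exactly the step where the homological invariance of the Milnor degree is needed, and everything else is routine manipulation of the supremum together with the standard identification of the length-two $\bar\mu$-invariants with linking numbers.
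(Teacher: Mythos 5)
Your proof is correct and follows essentially the same route as the paper, which simply observes that $\mil M>1$ if and only if $M$ is surgery on a diagonal link (by definition) and then invokes Proposition~\ref{prop:forms}. Your explicit appeal to the homological invariance of the Milnor degree to reconcile the $S^3$-based definition with the homology-sphere statement of Proposition~\ref{prop:forms} is a point the paper's one-line proof leaves implicit, and you handle it correctly.
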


Noting that the linking form of any $3$-manifold with torsion free homology is trivial, we deduce the following well-known result (see \cite[Lemma 5.1.1]{Les} for a direct proof):

\begin{corollary}\label{cor:torfree}
If $H_1M \cong \bz^r$, then $M$ has Milnor degree greater than one.  In fact $M$ can be obtained by zero surgery on an $r$-component diagonal link $L$ in an integral homology sphere $($by the last statement in Proposition~$\ref{prop:forms})$.
\end{corollary}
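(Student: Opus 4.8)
The plan is to read off both assertions from the linking-form characterizations already established, exploiting the fact that torsion-free homology forces the trivial linking form. First I would note that $H_1M\cong\bz^r$ makes the torsion subgroup $\tor M$ vanish, so that $\lf M$ is the trivial form on the zero group. By the conventions fixed above this form is simple (it is the form on $\bz_1=0$), hence a fortiori semisimple, and so it is certainly not non-semisimple. Corollary~\ref{cor:deg1} then says that $M$ is not of Milnor degree one, and since $\mil M\in\bn\cup\{\infty\}$ this is exactly the statement $\mil M>1$.

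For the sharper claim I would exhibit a concrete diagonal presentation by applying Corollary~\ref{cor:classical} to the most economical matrix with the right invariants, namely the $r\times r$ zero matrix $\bo_r$. This matrix has nullity $r$, which equals the first Betti number of $M$ since $H_1M\cong\bz^r$; and its cokernel is $\bz^r$, whose torsion subgroup is trivial, so $\lf{\bo_r}$ is the trivial form and hence isomorphic to $\lf M$. Corollary~\ref{cor:classical} then produces a link $L$ with linking matrix $\bo_r$ inside an integral homology sphere such that $M$ is surgery on $L$. Decoding the matrix $\bo_r$, the link $L$ has $r$ components, all pairwise linking numbers zero (the off-diagonal entries) and all framings zero (the diagonal entries); that is, $M$ is zero surgery on an $r$-component diagonal link, as required.

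I do not anticipate a real obstacle, since this is a direct application of the apparatus already in place; the only care needed is bookkeeping. One must check that $\bo_r$ simultaneously realizes \emph{both} data required by Corollary~\ref{cor:classical}, the nullity $r$ and the trivial linking form, and that a vanishing linking matrix means precisely a zero-framed link with no pairwise linking. An alternative, closer to the reference cited in the statement, runs through the simple case of Proposition~\ref{prop:forms}: the trivial form being simple yields a diagonal link with at most one nonzero framing, necessarily $\pm1$ for the torsion to vanish, and the two-stage surgery then absorbs that $\pm1$-framed component into the ambient homology sphere, leaving a zero-framed diagonal link. The appeal to Corollary~\ref{cor:classical} with $A=\bo_r$ is the most transparent way to pin down the number of components as exactly $r$.
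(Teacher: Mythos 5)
Your proof is correct and follows essentially the same route as the paper: triviality of the linking form plus Corollary~\ref{cor:deg1} gives $\mil M>1$, and the zero-surgery presentation comes from the Theorem~\ref{thm:classical} apparatus. Your direct application of Corollary~\ref{cor:classical} to the $r\times r$ zero matrix is, if anything, a slightly cleaner way to pin down exactly $r$ components all zero-framed than the paper's citation of the last statement of Proposition~\ref{prop:forms}, which requires the small absorption argument you sketch at the end.
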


In particular if $r=0$ then $M$ is an integral homology sphere, and so as noted in the introduction is of infinite degree.  If $r=1$ then $M$ is surgery on a knot in a homology sphere, and so again is of infinite degree: 

\begin{corollary}\label{cor:z}
If $H_1M \cong \bz$ then $M$ has infinite Milnor degree.  
\end{corollary}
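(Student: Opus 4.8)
The plan is to exhibit one surgery description of $M$ on a knot and then invoke the homological character of the Milnor degree. The first step is to apply Corollary~\ref{cor:torfree} with $r=1$: since $H_1M\cong\bz$, that corollary (through the last statement of Proposition~\ref{prop:forms}) produces a presentation of $M$ as zero surgery on a $1$-component diagonal link in some integral homology sphere $\Sigma$. A $1$-component link is a knot $K$, so we have $M=\Sigma_K$.

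The second step is to note that $\mil K=\infty$. A knot has no two distinct components to link, so all of its $\bar\mu$-invariants vanish (equivalently, a knot bounds a Seifert surface and is thus a boundary link), and by the convention fixed in the introduction such a link has infinite Milnor degree.

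The final step is to pass from the link $K$ back to the manifold $M$. By the homological property of the Milnor degree recorded in the introduction (and proved in Appendix~A), $\mil M$ equals the supremum of $\mil L$ over all framed links $L$ in an arbitrary integral homology sphere $\Sigma$ with $M=\Sigma_L$. The presentation $M=\Sigma_K$ contributes the value $\mil K=\infty$ to this supremum, forcing $\mil M=\infty$.

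The only subtlety worth flagging is that a single presentation suffices: because $\mil M$ is defined as a supremum, exhibiting one link of infinite degree already settles the claim, and it is exactly the homological property that legitimizes using a knot in a homology sphere $\Sigma$ (rather than insisting on $S^3$) for this purpose. I do not expect a genuine obstacle here, since both ingredients are already in hand.
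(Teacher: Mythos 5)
Your proof is correct and follows the paper's own route exactly: the paper likewise derives this from Corollary~\ref{cor:torfree} (presenting $M$ as surgery on a knot in a homology sphere), the fact that knots have infinite Milnor degree, and the homological property of the degree established in Appendix~A. Your flagging of the role of the homological property is a fair expansion of what the paper leaves implicit in its one-sentence justification.
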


If $r\ge2$, then it will be shown in \S2 that $\mil M = \mil L$, and as consequence that the Milnor degree can assume any value greater than one (except $2$, $4$ and $6$ when $r=2$).  

The situation is more complex when $H_1M$ has {\sl torsion}.  In the simplest case when $H_1M$ is a finite cyclic group, i.e.\ $M$ is a homology lens space, the theorem gives the complete story in the extreme situations when $\lf M$ is either simple or non-semisimple.  But it tells us nothing when $\lf M$ is semisimple but not simple.  When can this happen?  This and related questions will be addressed below.

\break

\part{Homology lens spaces}

Assume that $H_1M \cong \bz_n$ for some $n\ge1$.  Then $\lf M\cong(q/n)$ for some $q$ relatively prime to $n$.  The application of Corollary~\ref{prop:forms} to this situation requires an understanding of when this form is (semi)simple.  

\begin{proposition}  $(a)$ The linking form $(q/n)$ on $\bz_n$ is simple if and only if $q$ is plus or minus a quadratic residue mod $n$.    

\smallskip
$(b)$ The linking form $(q/n)$ on $\bz_n$ is semisimple if and only if there is a factorization $n=n_1\cdots n_k$ with the $n_i$ pairwise relatively prime such that each form $(q_i/n_i)$ is simple, where $q_i = qn/n_i$.  
\end{proposition}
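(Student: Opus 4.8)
The plan is to deduce both parts from the isomorphism criterion $(q/n)\cong(q'/n)\iff q'\equiv k^2q\pmod n$, together with a single computation describing how $(q/n)$ splits along a coprime factorization of $n$.

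For part $(a)$, I would begin by noting that a simple form has cyclic underlying group, so any simple form isomorphic to $(q/n)$, whose group is $\bz_n$, must be $(1/n)$ or $(-1/n)$. The criterion then gives $(q/n)\cong(1/n)$ exactly when $1\equiv k^2q\pmod n$ for some unit $k$, i.e.\ when $q\equiv(k^{-1})^2$ is a quadratic residue; similarly $(q/n)\cong(-1/n)$ exactly when $q$ is minus a quadratic residue. Hence $(q/n)$ is simple iff $q$ is $\pm$ a quadratic residue mod $n$.

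The heart of part $(b)$ is the decomposition lemma: if $n=n_1\cdots n_k$ with the $n_i$ pairwise coprime, then $(q/n)\cong\bigoplus_i(q_i/n_i)$ with $q_i=qn/n_i$. Writing $\lambda$ for the form $(q/n)$, I would prove this by taking $g_i=n/n_i$, a generator of the unique subgroup of order $n_i$ in $\bz_n$. Pairwise coprimality of the $n_i$ gives $\gcd(g_1,\dots,g_k)=1$, so the $g_i$ generate $\bz_n$; moreover any two of these subgroups are orthogonal, since $\lambda(g_i,g_j)$ has order dividing $\gcd(n_i,n_j)=1$ in $\bq/\bz$ and so vanishes. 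A direct calculation gives $\lambda(g_i,g_i)=qg_i^2/n=q_i/n_i$, identifying the $i$-th orthogonal summand as $(q_i/n_i)$ and proving the lemma.

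Given the lemma, the backward direction of $(b)$ is immediate: if each $(q_i/n_i)$ is simple, then $(q/n)\cong\bigoplus_i(q_i/n_i)$ is a sum of simple forms, hence semisimple. For the forward direction, suppose $(q/n)\cong\bigoplus_j(\e_j/m_j)$ with each $\e_j=\pm1$. The underlying group $\bigoplus_j\bz_{m_j}$ of the right-hand side is then cyclic of order $n$, which forces the $m_j$ to be pairwise coprime with product $n$; put $n_j=m_j$. The decomposition lemma also presents $(q/n)$ as $\bigoplus_j(q_j/n_j)$, and the remaining — and, I expect, most delicate — step is to match these two decompositions factor by factor. Here I would use that a cyclic group has a unique subgroup of each order: the isometry $\bz_n\to\bigoplus_j\bz_{m_j}$ must carry the order-$m_j$ subgroup $\langle n/m_j\rangle$, on which $(q/n)$ restricts to $(q_j/n_j)$, onto the $j$-th summand $\bz_{m_j}$, on which the form is $(\e_j/m_j)$. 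Hence $(q_j/n_j)\cong(\e_j/m_j)$ is simple, completing part $(b)$.
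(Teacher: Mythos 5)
Your proof is correct and follows essentially the same route as the paper's: both decompose $(q/n)$ orthogonally along a coprime factorization of $n$ and identify the summands as $(q_i/n_i)$, differing only in that you compute the restricted form directly on the generator $n/n_i$ of the order-$n_i$ subgroup, while the paper uses a partial-fraction decomposition $q/n=r_1/n_1+\cdots+r_k/n_k$ and then checks $(r_i/n_i)\cong(q_i/n_i)$ via the unit square $(n/n_i)^2$. You are somewhat more explicit than the paper about the forward direction of $(b)$ --- the paper simply asserts that every splitting into cyclic summands arises from a coprime factorization --- and your matching of summands via the uniqueness of subgroups of each order in a cyclic group supplies that omitted detail correctly.
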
  

\vskip-.1in
{\it Proof.} \ 
Criterion $(a)$ is just the definition of a simple form.  To prove (b), note that for any factorization $n=n_1\cdots n_k$ with the $n_i$ pairwise relatively prime, there exist integers $r_i$ for which 
$
q/n=r_1/n_1+\cdots+r_k/n_k,
$  
which yields an orthogonal splitting of the form 
$$
(q/n) \cong (r_1/n_1)\oplus\cdots\oplus(r_k/n_k).
$$
In fact any such splitting arises in this way.  Furthermore, the $r_i$ are uniquely determined mod $n_i$; indeed $r_i\equiv qs_i\pmod{n_i}$ where $s_i$ is any mod $n_i$ inverse of $n/n_i$.  Therefore by definition $(q/n)$ is semisimple if and only if $n$ has a factorization $n_1\cdots n_k$ as above for which each form $(qs_i/n_i)$ is simple.   But $q_i = qn/n_i \equiv qs_i(n/n_i)^2 \pmod{n_i}$, and so $(qs_i/n_i)\cong(q_i/n_i)$.  This completes the proof.  
\qed

Elementary number theoretic considerations show that $q$ (prime to $n$) is plus or minus a quadratic residue mod $n$, denoted $\res qn=1$ to evoke the Legendre symbol $\leg qp$, if and only if one of the following holds for all odd prime divisors $p$ of $n$:

\smallskip
\begin{itemize}
\item[$\bullet$] $\leg qp=1$ and $q\equiv1$ mod $\gcd(8,n)$ , or  
\smallskip
\item[$\bullet$] $\leg{-q}p=1$ and $q\equiv-1$ mod $\gcd(8,n)$
\end{itemize}

\smallskip\noindent
(see \cite[\S5.1]{IR}).  If $\res qn\ne1$, set $\res qn=-1$.  

\begin{example}\label{ex:primepowers} \textup{(Prime powers)
For $p$ prime, $\res q{p^e} = 1$ for {\it all} $q$ when either $p\equiv 3$ mod $4$ \ (since $\leg{-1}p=-1$) or $p=2$ and $e\le 2$.}  

\textup{If $p\equiv1$ mod $4$, then $\res q{p^e} =\leg qp$ (since $\leg{-1}p=1$) and so  $\res q{p^e} = 1 \iff q$ is a quadratic residue mod $p$.}

\textup{And if $e\ge3$ then $\res q{2^e}=1 \iff q\equiv\pm1\pmod8$.}
\end{example}

With this notation, the proposition says that $(q/n)$ is simple if and only if $\res qn=1$, and semisimple if and only if $n$ can be written as a product of ``$q$-quadratic" factors, defined as follows.

\begin{definition}\label{def:qquadratic}
\textup{A divisor $d$ of $n$ is {\itb\boldmath $q$-quadratic} if $d$ and $n/d$ are relatively prime and $\res{\,qd\,}{\,n/d\,}=1$.}
\end{definition}

In particular, $n$ is always $q$-quadratic, and $1$ is $q$-quadratic if and only if $\res qn=1$, i.e.\ $q$ is plus or minus a quadratic residue mod $n$.   In this language, Proposition~\ref{prop:forms} and Corollary \ref{cor:deg1} specialize to:

\begin{corollary}\label{cor:deg1lens}
Let $M$ be a $3$-manifold with first homology $\bz_n$ and linking form $(q/n)$.  Then $M$ has Milnor degree one if and only if $n$ is {\it not} a product of $q$-quadratic factors.  Furthermore, if $1$ is $q$-quadratic $($which for lens spaces just means that $M$ is homotopy equivalent to $L(n,1))$ then $M$ has infinite degree.  
\end{corollary}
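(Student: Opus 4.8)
The plan is to read off both assertions from the general machinery already in place, the only substantive work being the arithmetic translation of semisimplicity into the language of $q$-quadratic divisors.

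For the first assertion I would begin with Corollary~\ref{cor:deg1}, by which $M$ has Milnor degree one exactly when its linking form $(q/n)$ fails to be semisimple; it therefore suffices to show that $(q/n)$ is semisimple if and only if $n$ is a product of $q$-quadratic factors. Here I would invoke part $(b)$ of the Proposition, which characterizes semisimplicity by the existence of a pairwise-coprime factorization $n = n_1\cdots n_k$ with each $(q_i/n_i)$ simple, where $q_i = qn/n_i$. The bridge to Definition~\ref{def:qquadratic} is the observation that $(q_i/n_i)$ being simple, i.e.\ $\res{q(n/n_i)}{n_i}=1$, is exactly the requirement that the complementary divisor $d = n/n_i$ be $q$-quadratic, since then $n/d = n_i$. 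Thus a semisimple-certifying factorization is precisely a coprime factorization all of whose complementary divisors are $q$-quadratic, and rephrasing this as the statement that $n$ itself is a product of $q$-quadratic factors is carried out by reducing to iterated two-factor splittings, in which peeling off a $q$-quadratic divisor $d$ isolates a simple form on $\bz_{n/d}$ and leaves the form $(q(n/d)/d)$ on $\bz_d$ to be treated recursively. I expect this translation to be the main obstacle: one must track the interchange of a factor with its complement, which coincide only when there are two factors, together with the boundary cases $d=n$ (always $q$-quadratic) and $d=1$ (which is $q$-quadratic exactly when $(q/n)$ is already simple).

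For the supplementary statement I would argue directly. If $1$ is $q$-quadratic then $\res qn=1$, so $(q/n)$ is simple, and Proposition~\ref{prop:forms} presents $M$ as surgery on a diagonal link $L$ with at most one nonzero framing inside an integral homology sphere. Since $H_1M\cong\bz_n$ is finite, $L$ can have no zero-framed component, as each such component would contribute a free summand to $H_1M$; hence $L$ is either empty or a single framed knot. In either case $L$ has at most one component and is therefore a boundary link of infinite Milnor degree, so $\mil M=\infty$. Finally, the parenthetical identification is just the homotopy classification of lens spaces: $L(n,q)$ is homotopy equivalent to $L(n,1)$ if and only if $q\equiv\pm k^2\pmod n$ for some $k$, that is, if and only if $\res qn=1$, which is precisely the condition that $1$ be $q$-quadratic.
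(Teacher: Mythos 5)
Your overall route is the one the paper intends (the paper offers no proof, presenting the corollary as a direct specialization of Corollary~\ref{cor:deg1}, Proposition~\ref{prop:forms} and the proposition on cyclic forms), and your treatment of the ``furthermore'' clause is complete and correct: $1$ being $q$-quadratic gives $\res qn=1$, so $(q/n)$ is simple, Proposition~\ref{prop:forms} (or Corollary~\ref{cor:classical} applied to the $1\times1$ matrix $(\pm n)$) realizes $M$ as surgery on a link with at most one component in a homology sphere, and such links have infinite Milnor degree. The parenthetical about lens spaces is also handled correctly.

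The gap is exactly where you flag it, and it cannot be closed as you describe. Your key observation --- that $(q_i/n_i)$ is simple precisely when the \emph{complementary} divisor $n/n_i$ is $q$-quadratic --- is right, but it does not rephrase into ``$n$ is a product of $q$-quadratic factors'': for a $k$-factor certificate the $q$-quadratic divisors you obtain are $n/n_1,\dots,n/n_k$, whose product is $n^{k-1}$, while the factors $n_i$ themselves are $q$-quadratic only if each complementary form $\bigoplus_{j\ne i}(q_j/n_j)$ is \emph{simple}, which semisimplicity does not provide. Your proposed recursion inherits the same defect: after peeling off a $q$-quadratic divisor $d$, the factor extracted is $n/d$, and $n/d$ is a $q$-quadratic divisor of $n$ only when the residual form on $\bz_d$ is simple rather than merely semisimple; the divisors appearing at later stages are quadratic relative to new moduli and new multipliers, not $q$-quadratic divisors of $n$. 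The two notions genuinely diverge once three or more factors are needed. For example, take $n=2405=5\cdot13\cdot37$ and $q=2$: these primes are all $\equiv1\pmod4$ and satisfy $\leg2p=-1$ for each of them and $\leg5{13}=\leg5{37}=\leg{13}{37}=-1$, so each of $5$, $13$, $37$ is a $2$-quadratic divisor of $2405$ (e.g.\ $\res{10}{481}=1$ since $\leg{10}{13}=\leg{10}{37}=1$), and hence $n$ \emph{is} a product of $q$-quadratic factors; yet no pairwise coprime factorization makes all the forms $(q_i/n_i)$ simple (the $5$-primary piece is $(2/5)$ in every factorization that isolates $5$, and $\leg{\pm2}5=-1$ rules out every other grouping as well), so $(2/2405)$ is not semisimple and $L(2405,2)$ has Milnor degree one. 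Thus the equivalence you set out to prove is false under the literal reading of Definition~\ref{def:qquadratic}; what actually follows from part (b) of the proposition on cyclic forms together with Corollary~\ref{cor:deg1} --- and what your first two sentences already establish --- is that $M$ has Milnor degree one if and only if $n$ admits no pairwise coprime factorization $n=n_1\cdots n_k$ all of whose complementary divisors $n/n_i$ are $q$-quadratic (equivalently, with $\res{qn/n_i}{n_i}=1$ for every $i$). You should either prove that reformulation or adopt the complementary convention in the definition; the ``iterated two-factor splitting'' you defer to is not a routine translation but the point at which the literal statement breaks down.
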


It is straightforward (e.g.\ using Mathematica \cite{Wo}) to generate a complete list of non-semisimple forms $(q/n)$ for small $n$, as in Table 1:  For each $n$, the smallest residue values of $q$ are listed, one for each pair $(\pm q/n)$ of non-semisimple forms.  These correspond exactly to the lens spaces $L(n,q)$ of Milnor degree one.  

\begin{table}[!h]
\caption{\scshape Non-semisimple Cyclic Linking Forms $(\pm q/n)$} 
\tabletop
&&                             &\vline&&&                                &\vline&&&                                   \\ [1ex]
5   &&2                     &\vline&24  &&7                       &\vline&39  &&2, 5, 7                  \\ [1ex]
8   &&3                     &\vline&25  &&2, 3, 7              &\vline&40  &&7, 11, 19              \\ [1ex]
13  &&2, 5               &\vline&29  &&2, 3, 8, 12        &\vline&41  &&3, 6, 11, 12, 13   \\ [1ex]
16  &&3                   &\vline&32  &&3, 5                   &\vline&45  &&2, 7, 8                   \\ [1ex]
17  &&3, 5               &\vline&34  &&3, 5                   &\vline&48  &&7, 17                     \\ [1ex]
20  &&3                   &\vline&37  &&2, 5, 6, 8, 13    &\vline&52  &&5, 7, 11                 \\ [1ex]
&&& \vline &&&& \vline &&& \\ [-1.4ex]
\hline
\end{tabular}
\end{table}

The natural numbers $n$ for which $\bz_n$ supports a non-semisimple form, the first few of which appear in the table, will be called {\itb linked} numbers.  All other natural numbers will be called {\itb unlinked}.  Alternatively, these notions can be phrased in terms of the following:

\begin{definition}\label{def:milnor}
\textup{For any finitely generated abelian group $A$, define the {\itb Milnor set} $\mil A$ of $A$ to be the set of all natural numbers that can be realized as Milnor degrees of $3$-manifolds with first homology $A$,
$$
\mil A \ = \ \{\mil M \st M \textup{ is a $3$-manifold with } H_1M \cong A\}\ \cap\ \bn.
$$
Infinity is excluded because it can always be realized, as noted in the introduction.}
\end{definition}

Now Corollary~\ref{cor:deg1} shows that $n$ is linked if and only if $1\in\mil{\bz_n}$ (meaning there exist \break $3$-manifolds $M$ of degree one with $H_1M \cong \bz_n$).   

For example, from the calculations in Example~\ref{ex:primepowers} it follows that the linked prime powers are exactly the $p$-powers for $p\equiv1\pmod4$ and the $2$-powers $\ge8$, and that any product of primes all congruent to $3\pmod4$ is unlinked.  Furthermore, it is clear from the definitions that if $n$ is a prime power, then every semisimple form on $\bz_n$ is simple, and so as a consequence:

\begin{corollary}
If $n$ is a prime power $p^e$, then 
$$
\mil{\bz_n} \ = \ 
\begin{cases}
\{1\} &\text{if $p\equiv1$ (mod $4$) or $p=2$ and $e\ge3$} \\
\ \, \emptyset &\text{ otherwise}.
\end{cases}
$$
\end{corollary}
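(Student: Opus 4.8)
The plan is to exploit the quoted fact that on a cyclic group of prime power order every semisimple linking form is already simple; this collapses the trichotomy simple/semisimple/non-semisimple into a clean dichotomy. Writing $\lf M\cong(q/p^e)$ with $q$ prime to $p$, the form is therefore either simple or non-semisimple, and I would handle these two cases separately to prove that $\mil M\in\{1,\infty\}$ for every $M$ with $H_1M\cong\bz_{p^e}$ --- so that the only finite Milnor degree available to such a manifold is $1$.

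In the non-semisimple case, Corollary~\ref{cor:deg1} gives $\mil M=1$ at once. The substantive case is the simple one, where I would invoke the ``simple'' clause of Proposition~\ref{prop:forms} to write $M=\Sigma_L$ for a diagonal link $L$ in an integral homology sphere $\Sigma$ with at most one nonzero framing. Since $H_1M$ is finite, $L$ has no zero-framed components, and since $H_1M$ is nontrivial while $L$ carries at most one nonzero framing, $L$ must be a single-component link, i.e.\ a knot $K\subset\Sigma$. A knot is a one-component link, so all of its $\bar\mu$-invariants vanish and $\mil K=\infty$; because $\mil M$ is a supremum over surgery presentations and is homological (so presentations in $\Sigma$ are permitted), this forces $\mil M=\infty$. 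Combining the two cases gives the dichotomy $\mil M\in\{1,\infty\}$.

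It then remains only to decide whether the value $1$ is actually attained, i.e.\ whether $\mil{\bz_{p^e}}$ equals $\{1\}$ or $\emptyset$. As already noted in the text, $1\in\mil{\bz_{p^e}}$ if and only if $p^e$ is linked, i.e.\ if and only if some form $(q/p^e)$ is non-semisimple, equivalently $\res q{p^e}=-1$ for some $q$ prime to $p$. The computations of Example~\ref{ex:primepowers} decide this. When $p\equiv3\pmod4$, or $p=2$ with $e\le2$, one has $\res q{p^e}=1$ for every $q$, so every form $(q/p^e)$ is simple; by the dichotomy each manifold with this homology then has infinite degree and $\mil{\bz_{p^e}}=\emptyset$. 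When $p\equiv1\pmod4$ a quadratic non-residue $q$ mod $p$ yields $\res q{p^e}=\leg qp=-1$, and when $p=2$ with $e\ge3$ any $q\equiv3\pmod8$ yields $\res q{p^e}=-1$, so in these cases $p^e$ is linked and $\mil{\bz_{p^e}}=\{1\}$. These are exactly the two stated cases.

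The only real content lies in the simple case of the second paragraph: one must confirm that a simple form genuinely yields a \emph{single-component} surgery description in a homology sphere, so that the $\bar\mu$-invariants are vacuously trivial and the degree jumps all the way to $\infty$, rather than merely giving a diagonal link of some higher finite degree. Everything else is assembly, resting on the prime-power reduction of semisimple to simple and on the classification of linked prime powers recorded in Example~\ref{ex:primepowers}.
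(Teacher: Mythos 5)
Your proposal is correct and follows essentially the same route as the paper: the key point in both is that for $n=p^e$ every semisimple form on $\bz_n$ is simple (no nontrivial coprime factorization), so Corollary~\ref{cor:deg1lens} collapses to the dichotomy $\mil M\in\{1,\infty\}$, and Example~\ref{ex:primepowers} decides which prime powers are linked. Your explicit unwinding of the simple case to a one-component surgery description in a homology sphere is exactly the mechanism already underlying Corollary~\ref{cor:deg1lens}, so nothing new is needed.
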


The preceding discussion has brought attention to the natural numbers $n$ for which every semisimple form on $\bz_n$ is simple.  Such numbers will be called {\itb quasiprime}, since as noted above they include all prime powers.  The first few non-quasiprimes are $10$, $12$, $15$, $21$ and $24$.  By Corollary~\ref{cor:deg1}, we have $\mil{\bz_n} =\{1\}$ or $\emptyset$ for any quasiprime $n$, according to whether $n$ is linked or not.  

Pinning down the Milnor degree of non-simple manifolds $M$ with cyclic first homology of non-quasiprime order is much more difficult.  In fact we do not at present have a finite upper bound for the Milnor degree of any such manifold; conceivably they all have infinite degree. 

\begin{question}
Do there exist $3$-manifolds with finite cyclic first homology of finite Milnor degree greater than one?
\end{question}  

Of course {\sl lower} bounds for the Milnor degree can be established by displaying suitable surgery links.   For example $L(10,3)$, the smallest non-simple quasiprime lens space, has degree at least $3$.  Indeed, it can be obtained by surgery on a two-component link of degree $3$.  This is shown in Figure~\ref{fig:lens} using the Kirby calculus \cite{K}, starting with $10/3$ Dehn surgery on the unknot.  Unfortunately, that is all that we currently know about its Milnor degree.  Note however that manifolds of infinite degree with the same linking form are easily constructed, e.g.\ surgery on the unlink with framings $2$ and $-5$.

\begin{figure}[h!]\label{fig:lens}
\setlength{\unitlength}{1pt}
\begin{picture}(200,165)
\put(0,0){\includegraphics[height=150pt]{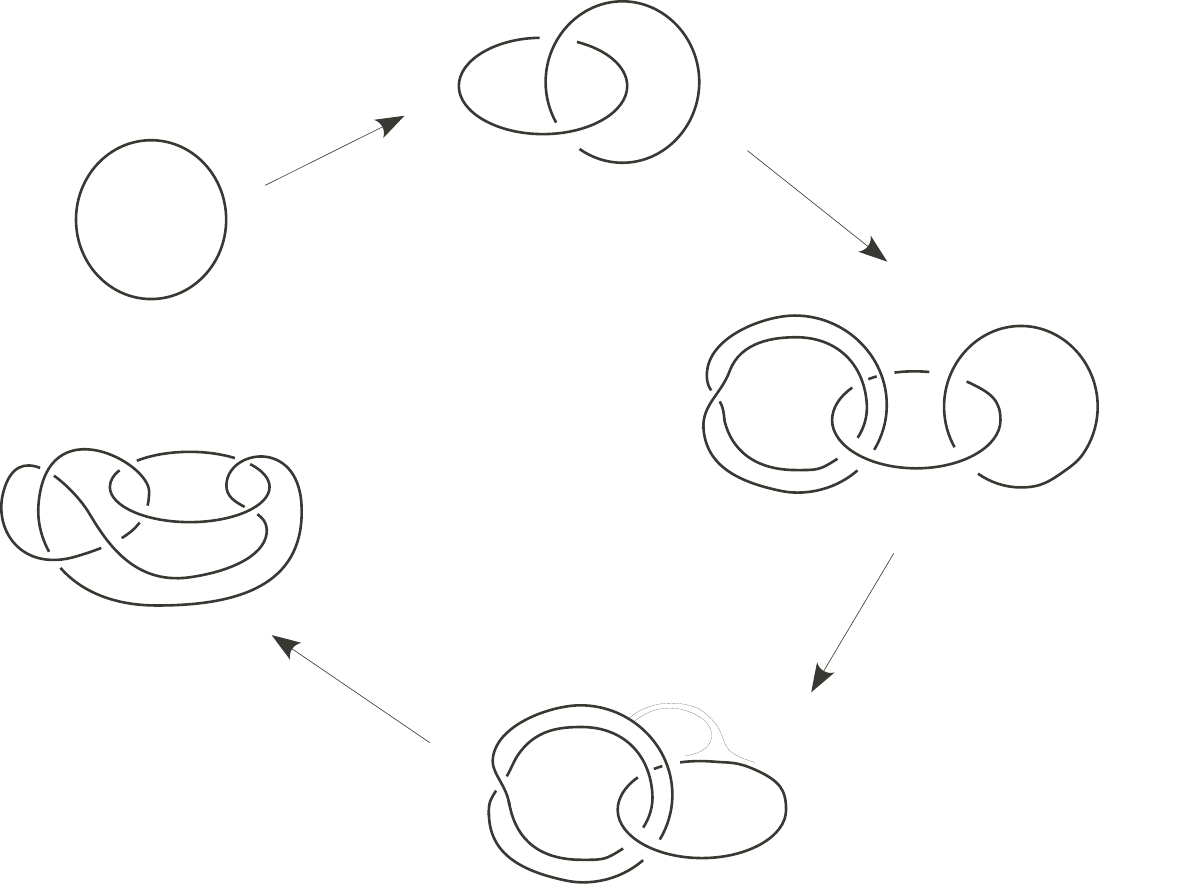}}
\put(2,120){$\frac{10}3$}   
\put(73,148){\small$-3$}  \put(110,153){\small3} 
\put(30,78){\small2}  \put(13,40){\small$-5$} 
\put(140,130){\small blow up} \put(142,120){\small left $+1$}  
\put(113,92){\small1}  \put(152,90){\small1}  \put(185,92){\small3}
\put(150,40){\small blow down} \put(151,30){\small middle $+1$}
\put(81,34){\small$-3$}  \put(125,24){\small2} 
\put(28,23){\small slide $-3$}  \put(31,13){\small over $2$}  
\end{picture}
\caption{Surgery descriptions for $L(10,3)$}
\end{figure}

\section{Manifolds with torsion free homology}   
In this section, the Milnor degree of any 3-manifold with torsion-free homology is related to the lower central series of its fundamental group, and thence to the Massey products of its one-dimensional cohomology classes.  As a result the degree of such a manifold can in principal be computed.  Some aspects of this theory that hold in the presence of torsion will be discussed at the end of the section.  

We begin by reviewing the definition and some basic properties of Milnor's $\mubar$-invariants for links in the $3$-sphere~\cite{M2}, or more generally in any integral homology sphere (see e.g.~\cite{CO3}, \cite[\S2]{FT2}, \cite[Appendix A]{HO}).  By considering only the first nonvanishing  invariants, we avoid any consideration of indeterminacy.  Throughout, the lower central series of a group $G$ will be written $G = G_1\rhd G_2 \rhd G_3 \rhd \cdots$ where $G_k=[G_{k-1},G]$ for $k>1$. 
 
\part{Milnor's link invariants}

Let $L$ be an $r$-component ordered oriented link in an integral homology sphere $\Sigma$, and set $G=\pi_1(\Sigma-L)$.   A presentation of the nilpotent quotient $G/G_k$ for each $k$ can be obtained as follows, generalizing a result of Milnor (Theorem 4 in \cite{M2}).  

Enlarge $L$ to a connected $1$-complex $\widehat L$ by adjoining disjoint paths from its components to a common basepoint, and choose based meridians $m_1,\dots,m_r$ and longitudes $\ell_1,\dots,\ell_r$ in $\Sigma-\widehat L$ for the components of $L$.  Let $F$ be the free group of rank $r$ generated by the $m_i$, and set $\widehat G = \pi_1(\Sigma-\widehat L)$.  Then there is a commutative diagram
$$
\begin{CD}
F            @>\phi>>        \widehat G       @>h>>     G             \\
@VVV                            @VVV                                 @VVV    \\
F/F_k            @>\phi_k>>        \widehat G/\widehat G_k       @>h_k>>     G/G_k            
\end{CD}
$$
for each $k\ge1$, where $\phi$ is the ``meridional" map sending $m_i$ to itself, $h$ is induced by the inclusion $\Sigma-\widehat L \hookrightarrow \Sigma-L$, and the vertical maps are the natural projections.   

Observe that $\phi$ is $2$-connected on integral homology (it is clearly an isomorphism on $H_1$, and $H_2F=H_2\widehat G=0$) and so $\phi_k$ is an isomorphism by Stalling's theorem ~\cite[Theorem 3.4]{St}.  In particular $\phi_k$ is surjective, so for each longitude $\ell_i$ we can choose an element $\ell_i^k\in F$ that represents $\ell_i$ in the sense that
$$
\phi(\ell_i^k)\equiv\ell_i\pmod{\widehat G_k}.
$$
The particular choice of $\ell_i^k$ will not affect our subsequent discussion.  Such elements will be called {\itb Milnor words} of {\it degree} $k-1$ (or {\it length} $k$) for $\ell_i$.  They form a coset of $F_k$ in $F$, since $\phi_k$ is injective.  

Also observe that $h$ is surjective with kernel normally generated by the commutators $[m_i,\ell_i]$ (for $i=1,\dots,r$) since $\Sigma-L$ is obtained from $\Sigma-\widehat L$ by adding $2$-cells along these commutators (and one $3$-cell).   It follows that the composition $h_k\circ\phi_k$ is an epimorphism with kernel normally generated by the cosets of the commutators $[m_i,\ell_i^k]$, and so
$$
G/G_k\ \cong \ \langle m_1,\dots,m_r \st F_k,\,[m_i,\ell_i^k] \ \textup{for}\ 1\leq i\leq r\rangle.
$$

Now by definition, Milnor's invariants of degree $0$ are zero.  Assuming inductively that the invariants of degree less than $k$ vanish, those of degree $k$ (or equivalently length $k+1$) are defined as follows:  

For any sequence $I=i_1\dots i_ki$ of $k+1$ integers between $1$ and $r$, the integer invariant $\mubar_L(I)$ is the coefficient of $h_{i_1}\cdots h_{i_k}$ in $e(\ell_{i}^{k+1})$, where $e$ is the Magnus embedding $m_i\mapsto 1+h_i$ of $F$ into the group of units in the ring of  power series in noncommuting variables $h_1,\dots,h_r$.   The Magnus embedding has the property that $x\in F_{k+1}$ if and only if $e(x)$ is of the form $1+h$ where $h$ has only terms of degree $>k$.  Thus fixing $i$ but letting $i_1,\dots,i_{k}$ vary, the collection of integers $\mubar_L(I)$ are all zero precisely when $\ell_{i}^{k+1}$ lies in $F_{k+1}$.  Allowing $i$ to vary, this implies

\begin{lemma}\label{lem:milnor1}
Let $L$ be an $r$-component link with Milnor words $\ell_i^k$ in the free group $F$ of rank $r$, as above.  Then 
$\mil L = \sup\,\{k \st \ell_i^k\in F_k \textup{ for all } i\}$.
\end{lemma}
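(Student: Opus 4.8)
The plan is to show that the set $S=\{k\st \ell_i^k\in F_k \textup{ for all } i\}$ is an initial segment of $\bn$ whose supremum is precisely the first degree at which a $\mubar$-invariant fails to vanish, namely $\mil L$. Before anything else I would check that the defining condition is independent of all choices, so that $S$ (and hence $\sup S$) is well defined: since the Milnor words of length $k$ for $\ell_i$ form a single coset of $F_k$, any two admissible choices of $\ell_i^k$ differ by an element of $F_k$, so whether or not $\ell_i^k\in F_k$ (i.e.\ whether this coset is $F_k$ itself) does not depend on the representative chosen.

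Next I would verify that $S$ is downward closed. The key observation is that a length-$k$ Milnor word $\ell_i^k$ is automatically a Milnor word of every smaller length $k'\le k$: because the lower central series is decreasing, $\widehat G_k\subseteq\widehat G_{k'}$, so $\phi(\ell_i^k)\equiv\ell_i\pmod{\widehat G_k}$ gives $\phi(\ell_i^k)\equiv\ell_i\pmod{\widehat G_{k'}}$. Hence if $k\in S$, so that $\ell_i^k\in F_k\subseteq F_{k'}$, then $\ell_i^k$ is a length-$k'$ Milnor word lying in $F_{k'}$, and since all such words form the coset $\ell_i^k F_{k'}=F_{k'}$, every length-$k'$ Milnor word lies in $F_{k'}$; thus $k'\in S$. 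Noting that $1\in S$ trivially (as $F_1=F$), and writing $d=\sup S\in\bn\cup\{\infty\}$, it follows that $S=\{1,\dots,d\}$ when $d$ is finite and $S=\bn$ when $d=\infty$.

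Finally I would identify $d$ with $\mil L$ via the correspondence established just before the lemma. For fixed $i$, the degree-$k$ invariants $\mubar_L(i_1\cdots i_k\,i)$ are the coefficients of the degree-$k$ monomials in $e(\ell_i^{k+1})$, and by the Magnus characterization of $F_{k+1}$ these all vanish (as $i_1,\dots,i_k$ range over $1,\dots,r$) exactly when $\ell_i^{k+1}\in F_{k+1}$. Suppose first that $d$ is finite. Since $d\in S$ but $d+1\notin S$, we have $\ell_i^d\in F_d$ for all $i$, so all invariants of degree $<d$ vanish, while $\ell_j^{d+1}\notin F_{d+1}$ for some $j$; being a length-$d$ Milnor word and $d\in S$, this $\ell_j^{d+1}$ already lies in $F_d$, so its leading degree-$d$ term is nonzero and yields a nonvanishing degree-$d$ invariant. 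Hence $\mil L=d$. If instead $d=\infty$, then $\ell_i^{k+1}\in F_{k+1}$ for all $i$ and all $k$, so every $\mubar$-invariant vanishes and $\mil L=\infty=d$.

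I do not expect a serious obstacle, since the statement is essentially a rereading of the displayed correspondence; the only points demanding care are the choice-independence and downward-closure of $S$, both of which rest on the single fact that a Milnor word of a given length serves as a Milnor word of every shorter length.
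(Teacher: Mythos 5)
Your proof is correct and follows essentially the same route as the paper, which derives the lemma directly from the Magnus-embedding characterization ($\ell_i^{k+1}\in F_{k+1}$ iff all degree-$k$ invariants ending in $i$ vanish) stated in the paragraph immediately preceding it. The extra bookkeeping you supply --- choice-independence via the coset structure and downward closure of the set $S$ --- is exactly what the paper leaves implicit after noting that the Milnor words of a given length form a coset of $F_k$.
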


\begin{remark}
\textup{With a little work using the presentation of $G/G_k$ and properties of the Magnus embedding, it can be shown that
$$
\mil L \ = \ \sup\,\{k \st \ell_i\in G_k \textup{ for all } i\} \ = \ \sup\,\{k \st F/F_k\cong G/G_k\}-1
$$
although this will not be needed below.}
\end{remark}

\part{Zero surgery and nilpotent quotients}

Any 3-manifold $M$ whose first homology is $\bz^r$ can be obtained by zero surgery on an \break $r$-component {\it diagonal} link -- meaning pairwise linking numbers vanish -- in some homology sphere, as noted in Corollary~\ref{cor:torfree} (cf.\,\cite[Lemma 5.1.1]{Les}).  The main result of this section is that the Milnor degree of $M$ can be calculated from \emph{any} such framed link description. Thus for manifolds with torsion-free homology, the Milnor degree can theoretically be computed.  This is in sharp contrast to the situation when torsion is present.

\begin{theorem}\label{thm:main} 
If $M$ is zero surgery on a diagonal link $L$ in an integral homology sphere, then $\mil{M}=\mil{L}$.
\end{theorem}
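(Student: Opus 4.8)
The plan is to reinterpret the Milnor degree as an intrinsic invariant of $\pi_1(M)$ and then show that this invariant bounds the degree of every surgery presentation. The inequality $\mil M\ge\mil L$ is immediate: $L$ is itself a surgery link for $M$, and the Milnor degree may be computed using links in any integral homology sphere. So everything rests on the reverse inequality. Writing $P=\pi_1(M)$ with lower central series $P=P_1\rhd P_2\rhd\cdots$ and $r=b_1(M)$ (zero surgery on the $r$-component diagonal link $L$ gives $H_1M\cong\bz^r$, since its linking matrix is the zero matrix), I would introduce the invariant
$$
\nu(M)\ =\ \sup\{k\ge1 \st P/P_k\cong F/F_k\},
$$
the largest $k$ for which $P/P_k$ is free nilpotent of rank $r$; this depends only on $M$.

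The first main step is to prove $\mil L=\nu(M)$ for any diagonal zero-framed presentation. Since $M$ is zero surgery on $L$, killing the longitudes gives $P\cong\widehat G/\langle\langle\ell_1,\dots,\ell_r\rangle\rangle$, and combining the isomorphism $\phi_k$ with the identity $(H/N)/(H/N)_k=H/(H_kN)$ yields the presentation $P/P_k\cong\langle m_1,\dots,m_r \st F_k,\,\ell_i^k \textup{ for } 1\le i\le r\rangle$, with the meridional map $\psi_k\colon F/F_k\to P/P_k$ surjective. If $k\le\mil L$ then $\ell_i^k\in F_k$ by Lemma~\ref{lem:milnor1}, the relations become trivial, and $\psi_k$ is an isomorphism, so $P/P_k$ is free nilpotent and $k\le\nu(M)$. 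Conversely, if $P/P_k\cong F/F_k$ then $\psi_k$ is a surjection onto a group isomorphic to its finitely generated nilpotent (hence Hopfian) source, so $\psi_k$ is an isomorphism; since each $\ell_i^k$ maps to $1$ in $P/P_k$ it must lie in $F_k$, and thus $k\le\mil L$ by Lemma~\ref{lem:milnor1}. This gives $\mil L=\nu(M)$, and in particular $\mil M\ge\nu(M)$.

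For the upper bound I would show $\mil{L'}\le\nu(M)$ for every presentation $M=\Sigma'_{L'}$. The case $\mil{L'}\le1$ is vacuous since $\nu(M)\ge1$, so suppose $\mil{L'}=d\ge2$. Then the degree-one invariants (pairwise linking numbers) of $L'$ vanish, so $L'$ is diagonal; because $H_1M$ is torsion free its framings lie in $\{0,\pm1\}$, with exactly $r$ zeros. Surgering the $\pm1$-framed sublink, whose linking matrix is unimodular, produces an integral homology sphere $\Sigma''$ in which the remaining zero-framed components form a diagonal link $L''$ (their pairwise linking numbers, unaffected by this surgery, still vanish) with $M=\Sigma''_{L''}$. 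The inclusion of link complements induces a homomorphism carrying each longitude of $L'$ to the corresponding canonical zero-framed longitude of $L''$; since homomorphisms respect lower central series and all longitudes of $L'$ lie in the $d$-th term, the longitudes of $L''$ lie in the $d$-th term of its link group, whence $\mil{L''}\ge d$ by the Remark following Lemma~\ref{lem:milnor1}. Applying the first step to $L''$ then gives $\mil{L'}\le\mil{L''}=\nu(M)=\mil L$, completing the argument.

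I expect the conceptual crux to be the identity $\mil L=\nu(M)$, and within it the Hopficity step, which is precisely what upgrades the a priori presentation-dependent statement that the meridians freely generate $P/P_k$ to the invariant statement that $P/P_k$ is free nilpotent of rank $r$. The remaining delicate point is the reduction in the last paragraph: one must verify that surgering the $\pm1$-framed components preserves both $M$ and the canonical zero-framed longitudes of the surviving components, so that the Milnor degree cannot decrease.
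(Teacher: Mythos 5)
Your proof is correct and follows the paper's argument essentially step for step: your invariant $\nu(M)$ and the identity $\mil L=\nu(M)$ are precisely the paper's Lemma~\ref{lem:milnor2} (proved there by the same presentation-plus-Hopficity argument), and your reduction of an arbitrary presentation to its zero-framed sublink in the homology sphere obtained by surgering the $\pm1$-framed components is exactly the paper's proof of Theorem~\ref{thm:main}. The only cosmetic difference is that you justify $\mil{L''}\ge d$ via the longitude characterization in the Remark following Lemma~\ref{lem:milnor1} (which the paper states without proof), whereas the paper simply invokes the fact that the Milnor invariants of a sublink are among those of the full link.
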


Before giving the proof, we derive the following consequence, the first of several realization results that will be established.  Recall that $\mil A$ denotes the set of all natural numbers that arise as Milnor degrees of $3$-manifolds with first homology $A$.

\begin{corollary}\label{cor:realization1} 
$$
\mil{\bz^r} = \begin{cases}
\emptyset &\text{if $r=0$ or $1$} \\
\bn-\{1,2,4,6\} &\text{if $r=2$} \\
\bn-\{1\} &\text{if $r\ge3$}
\end{cases}
$$
\end{corollary}

\noindent{\it Proof}. \ 
Let $\mil r$ denote the set of all natural numbers that arise as the Milnor degrees of \break $r$-component diagonal links,\foot{Note that $1\notin\mil r$ since we are restricting to diagonal links.  Also, we exclude $\infty$ from $\mil r$ since it is realized for any $r$ by the $r$-component unlink.} so $\mu_{\bz^r} = \mu_r$ by the theorem.  Clearly $\mil0 = \mil1=\emptyset$, and 
$$
\mil r = \bn - \{k \st M_k^r = 0\}
$$
for $r\ge2$, where $M_k^r$ is the number of linearly independent Milnor invariants of degree $k$ distinguishing $r$-component links in the $3$-sphere whose lower degree invariants vanish.  

The {\itb Milnor numbers} $M_k^r$ were computed by Orr \cite{O1} to be
$$
M_k^r = rN_k^r - N_{k+1}^r
$$
where $N_k^r$ denotes the number of basic commutators of length $k$ in $r$ variables, given classically by Witt's formula
$$
N_k^r \ = \ \frac1k \, \sum_{d|k} \, \mu(d) \, r^{k/d}.
$$
Here $\mu(d)$ is the M\"obius function, defined to be $+1$ if $d=1$ or $d$ is a product of an even number of distinct primes, to be $-1$ if $d$ is a product of an odd number of distinct primes, and to be $0$ otherwise.  Therefore the corollary reduces to the following number theoretic result, whose proof is deferred to Appendix B. 

\begin{lemma}\label{lem:numbertheory}
The Milnor number $M_k^r $ is positive, or equivalently $N_{k+1}^r < rN_k^r$, for all integers $r,k\ge2$
except when $r=2$ and $k=2$, $4$ or $6$.  
\end{lemma}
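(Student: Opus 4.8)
The plan is to work entirely with Witt's formula, reducing the statement $M_k^r>0$ to the inequality $N_{k+1}^r<rN_k^r$ and then estimating both sides by isolating their leading terms. Writing $\tau(m)$ for the number of divisors of $m$, the idea is that in
\[
rN_k^r=\frac1k\sum_{d\mid k}\mu(d)\,r^{k/d+1},\qquad N_{k+1}^r=\frac1{k+1}\sum_{d\mid k+1}\mu(d)\,r^{(k+1)/d},
\]
the $d=1$ terms dominate: they contribute $r^{k+1}/k$ and $r^{k+1}/(k+1)$, whose difference $r^{k+1}/(k(k+1))$ is positive, while every remaining term has exponent at most $k/2+1$. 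Using $|\mu(d)|\le1$ and the fact that a proper divisor $d>1$ of $m$ forces $m/d\le m/2$, I would bound the subleading contributions by $(\tau(k)-1)r^{k/2+1}$ and $(\tau(k+1)-1)r^{(k+1)/2}$; since $(k+1)/2\le k/2+1$ this yields
\[
M_k^r\ \ge\ r^{k/2+1}\!\left(\frac{r^{k/2}}{k(k+1)}-\frac{\tau(k)-1}{k}-\frac{\tau(k+1)-1}{k+1}\right),
\]
so $M_k^r>0$ whenever the sufficient condition $(\ast)$: $r^{k/2}>(k+1)(\tau(k)-1)+k(\tau(k+1)-1)=:B_k$ holds.

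Next I would argue that $(\ast)$ disposes of all but finitely many pairs. The quantity $B_k$ is independent of $r$, and the elementary bound $\tau(m)\le 2\sqrt m$ gives $B_k\le 4(k+1)^{3/2}$; comparing with $r^{k/2}\ge 2^{k/2}$ shows that $(\ast)$ holds for every $r\ge2$ once $16(k+1)^3<2^k$, i.e.\ for all $k\ge17$. For the remaining range $2\le k\le16$ I would insert the exact (small) values of $B_k$: a direct check shows $(\ast)$ still holds for all $r\ge2$ when $13\le k\le16$, so the only uncovered pairs are those with $2\le k\le12$ and $r^{k/2}\le B_k$. Because $B_k$ is small this is an explicit short list — only $r\le5$ ever occurs, and for $k\ge7$ only $r=2$ survives — so the exceptional region is finite and easily enumerated.

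Finally I would verify these surviving pairs by computing $M_k^r=rN_k^r-N_{k+1}^r$ directly from Witt's formula and tabulating. This confirms $M_k^r>0$ throughout except at $(r,k)=(2,2),(2,4),(2,6)$, where $M_k^2=0$; for instance $M_2^2=2\cdot1-2$, $M_4^2=2\cdot3-6$, and $M_6^2=2\cdot9-18$ all vanish, while every other listed pair is strictly positive. (One can also package the vanishing conceptually: $M_k^r$ is the dimension of the kernel of the bracket map from the tensor product of the degree-one and degree-$k$ parts of the free Lie algebra on $r$ generators onto its degree-$(k+1)$ part, which makes $M_k^r\ge0$ transparent and identifies the exceptions as the rare cases where this bracket is an isomorphism.)

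I expect the main obstacle to be calibrating $(\ast)$ sharply enough that the leftover finite check is genuinely small: the bound $\tau(m)\le2\sqrt m$ is quite lossy for the smallest $k$, where $B_k$ is tiny because $k$ or $k+1$ is prime, so the reduction to a manageable list relies on using the \emph{exact} values of $B_k$ in the window $2\le k\le16$ rather than the asymptotic bound, together with the hand-checked boundary cases $13\le k\le16$. The three genuine exceptions at $r=2$ lie just inside this window, so the argument must be arranged so that the estimate does not inadvertently ``cover'' them.
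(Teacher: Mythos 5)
Your argument is correct, and it takes a genuinely different route from the paper's. The paper does not estimate the difference $rN_k^r-N_{k+1}^r$ directly; instead it establishes the two-sided bound $r^k/(k+1)\le N_k^r<r^k/k$ for all non-exceptional $(r,k)$, from which the lemma follows in one line via $N_{k+1}^r<r^{k+1}/(k+1)=r\cdot r^k/(k+1)\le rN_k^r$. The lower bound there is obtained by organizing Witt's formula as an alternating sum $N_k^r=\frac1k\sum_s(-1)^s n_s$ over the number $s$ of prime factors of the divisor $d$, proving that $n_0,n_1,\dots$ is decreasing (so the alternating sum is trapped between $(n_0-n_1)/k$ and $n_0/k$), and then showing the key inequality $r^k\ge(k+1)\sum_{p\mid k}r^{k/p}$, which fails precisely at $(2,2),(2,4),(2,6)$. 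By contrast, you use only the crude bound $|\mu(d)|\le1$ together with divisor counts $\tau(k),\tau(k+1)$, arriving at the sufficient condition $r^{k/2}>(k+1)(\tau(k)-1)+k(\tau(k+1)-1)$, an asymptotic estimate via $\tau(m)\le2\sqrt m$ for $k\ge17$, and an explicit finite verification of the leftover pairs. I checked your estimates (the factorization of $r^{k/2+1}$, the bound $B_k\le4(k+1)^{3/2}$, the threshold $2^{17}>16\cdot18^3$, and the residual list with $r\le5$, $k\le12$) and they are all sound; the tabulated values $N_k^2=1,2,3,6,9,18$ confirm the three exceptional vanishings. What the paper's approach buys is a cleaner structural statement --- the bound on $N_k^r$ itself, with the exceptional cases emerging exactly as the failure of one inequality --- and a much smaller computational tail; what yours buys is self-containedness (no exploitation of the sign pattern of the M\"obius function) at the cost of a larger but still easily enumerated finite check. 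Your parenthetical identification of $M_k^r$ as the kernel dimension of the surjective bracket map $L_1\otimes L_k\to L_{k+1}$ in the free Lie algebra is also correct and is a nice conceptual explanation of nonnegativity, though neither proof needs it.
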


\begin{remark} \textup{(a) A little more can be said for $3$-manifolds $M$ with $H_1M\cong\bz^2$, namely that $\mil M=3\iff$ the Lescop invariant $\les M$ of $M$ \cite{Les} is nonzero.  Indeed $\les M$ is equal to the negative of the
Sato-Levine invariant of any $2$-component link whose zero surgery produces $M$ ~\cite[Prop.\ T5.2]{Les}, which in turn equals $-\mubar(1122)$ of the link ~\cite[Th.\  9.1]{C3}, the unique (up to sign) $\mubar$-invariant of degree $3$ ~\cite[App.\ B]{C4}\cite{O1}.}

\smallskip

\textup{(b) Our proof of Corollary~\ref{cor:realization1} (via Lemma~\ref{lem:numbertheory}, proved in the appendix) is non-constructive.  Using the techniques of \cite{C4}, however, one can give explicit examples for each $r\ge3$ of $r$-component links of any given Milnor degree $d\ge2$, and thus by doing zero surgery on these links, of $3$-manifolds with first homology $\bz^r$ of Milnor degree $d$.}

\textup{For example, one such link is the split union $L_d^r$ of the $3$-component link $L_d$ in Figure~\ref{fig:realize} with the $(r-3)$-component unlink.  Note that $L_d$ is obtained from the $(d-1)^{\textup{st}}$ iterated Bing-double of the Hopf link (denoted $H^d$ in \S3 below) by banding together some of its components, following the procedure of ~\cite[\S7.4]{C4}.  By ~\cite[\S6]{C4}, the Milnor invariant $\mubar(32...21)$ of degree $d$ is equal to the single self-linking number $\lk(c(3),c(2...21))=\pm\lk(c(2...23),(2...21))$. For $L_d$, this linking number is equal to $1$ while the invariants of degree less than $d$ vanish (see 7.2 and 7.4 in \cite{C4}). Therefore $\mil {L^r_d}=d$.}
\end{remark}

\vskip .2in

\begin{figure}[h!]\label{fig:realize}
\setlength{\unitlength}{1pt}
\begin{picture}(100,100)
\put(-35,10){\includegraphics[height=100pt]{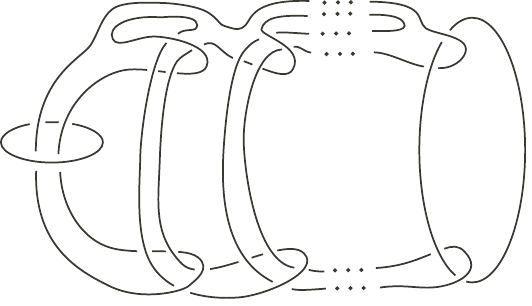}}
\put(-44,60){\small1}
\put(140,40){\small3}
\put(-25,85){\small2}
\put(45,0){\small $d-1$ loops}
\end{picture}
\caption{A $3$-component link $L_d$ of degree $d$}
\end{figure}

For $r=2$, Milnor \cite[Fig.\,1]{M2} has given examples (without proof) in each odd degree $d$, shown below in Figure~\ref{fig:realize2}.  It was confirmed in \cite[Example 2.7]{C4} that these do indeed have degree $d$.  It should be feasible using the same methods to produce such examples for even $d$ as well, although we have not done so here.

\begin{figure}[h!]
\setlength{\unitlength}{1pt}
\begin{picture}(230,90)
\put(0,0){\includegraphics[height=70pt]{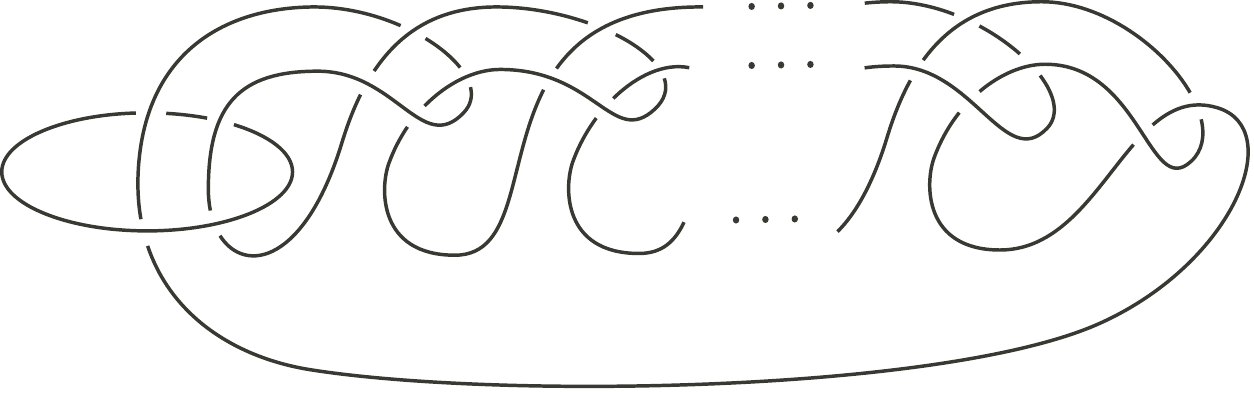}}
\put(0,50){\small1}
\put(210,10){\small2}
\put(100,80){\large $\frac{d-1}2$ \small loops}
\end{picture}
\caption{Milnor's $2$-component links of odd degree $d$}
\label{fig:realize2}\end{figure}

We now turn to the proof of Theorem~\ref{thm:main}.  It is based on the following characterization of the Milnor degree of a diagonal link:

\begin{lemma}\label{lem:milnor2}
Let $L$ be an $r$-component diagonal link in an integral homology sphere $\Sigma$, $M$ be the $3$-manifold obtained by zero surgery on $L$, $\pi$ be the fundamental group of $M$, and $F$ be a free group of rank $r$.  Then 
$
\mil L = \sup\,\{k \st F/F_k\cong\pi/\pi_k\}.
$
\end{lemma}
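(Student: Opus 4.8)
The plan is to compute the nilpotent quotients $\pi/\pi_k$ explicitly from a group presentation, compare them with $F/F_k$ through the natural meridional map, and then invoke Lemma~\ref{lem:milnor1}. The whole statement will come down to showing that the set $\{k \st \ell_i^k\in F_k \textup{ for all } i\}$ appearing in that lemma coincides with $\{k \st F/F_k\cong\pi/\pi_k\}$.

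First I would identify $\pi$ with a quotient of $G=\pi_1(\Sigma-L)$. Since $M$ is zero surgery on the diagonal link $L$, the $i$-th surgery curve is exactly the preferred longitude $\ell_i$ (the framing correction vanishes because the pairwise linking numbers are zero, so the zero framing agrees with the Seifert framing), and van Kampen's theorem gives $\pi\cong G/\langle\langle \ell_1,\dots,\ell_r\rangle\rangle$, where $\langle\langle\ \rangle\rangle$ denotes normal closure. Combining the general identity $(G/N)/(G/N)_k\cong G/(G_k N)$ with the presentation
$$
G/G_k\ \cong\ \langle m_1,\dots,m_r \st F_k,\ [m_i,\ell_i^k] \ \textup{for}\ 1\le i\le r\rangle
$$
recorded above, and the fact that $\ell_i$ is represented modulo $G_k$ by the Milnor word $\ell_i^k$, I would obtain
$$
\pi/\pi_k\ \cong\ \langle m_1,\dots,m_r \st F_k,\ \ell_1^k,\dots,\ell_r^k\rangle,
$$
the relations $[m_i,\ell_i^k]$ becoming redundant once the $\ell_i^k$ are killed.

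With this presentation in hand, the meridional map $F\to\pi$ induces a natural surjection $s\colon F/F_k\twoheadrightarrow\pi/\pi_k$, which is onto because the meridians generate ($\phi_k$ is an isomorphism and $h_k$ is surjective), and whose kernel is normally generated by the images of $\ell_1^k,\dots,\ell_r^k$. Hence $s$ is an isomorphism exactly when $\ell_i^k\in F_k$ for all $i$. This immediately yields the inclusion $\{k \st \ell_i^k\in F_k \ \forall i\}\subseteq\{k \st F/F_k\cong\pi/\pi_k\}$, since in that case the added relations are already trivial.

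The reverse inclusion -- that an \emph{abstract} isomorphism $F/F_k\cong\pi/\pi_k$ forces the natural map $s$ to be an isomorphism -- is the step I expect to be the main obstacle, and I would settle it by a Hopficity argument. Both $F/F_k$ and $\pi/\pi_k$ are finitely generated nilpotent, hence residually finite, hence Hopfian. Given any isomorphism $\psi\colon\pi/\pi_k\to F/F_k$, the composite $\psi\circ s$ is a surjective endomorphism of the Hopfian group $F/F_k$, so it is an isomorphism; therefore $s$ is an isomorphism as well, forcing $\ell_i^k\in F_k$ for all $i$. Consequently the two sets coincide, so their suprema agree, and since the supremum of the first is $\mil L$ by Lemma~\ref{lem:milnor1}, the asserted identity $\mil L=\sup\,\{k \st F/F_k\cong\pi/\pi_k\}$ follows.
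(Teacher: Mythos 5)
Your proposal is correct and follows essentially the same route as the paper: derive the presentation $\pi/\pi_k\cong\langle m_1,\dots,m_r \mid F_k,\ \ell_1^k,\dots,\ell_r^k\rangle$ from the presentation of $G/G_k$, observe that the natural surjection $F/F_k\to\pi/\pi_k$ is an isomorphism precisely when the $\ell_i^k$ lie in $F_k$, and use Hopficity of the finitely generated nilpotent group $F/F_k$ to upgrade an abstract isomorphism to the conclusion that the natural quotient map is one. The only cosmetic difference is that you justify Hopficity via residual finiteness while the paper cites it directly for finitely generated nilpotent groups.
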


\vskip-.1in

\begin{proof}
Set $G = \pi_1(\Sigma-L)$.  Then as shown above
$$
G/G_k\ \cong \ \langle m_1,\dots,m_r \st F_k,\,[m_i,\ell_i^k] \ \textup{for}\ 1\leq i\leq r\rangle
$$
for all $k$, where $\ell_i^k$ are the associated Milnor words.  Zero surgery adds the relations $\ell_i=1$, and so for each $k$
$$
\pi/\pi_k\ \cong \ \langle m_1,...,m_m \st F_k,\,\ell_i^k \ \textup{for}\ 1\leq i\leq r\rangle,
$$
evidently a quotient of $F/F_k$ by the $\ell_i^k$.  It is clear from this presentation that if $\ell_i^k\in F_k$ for all $i$, then $\pi/\pi_k \cong F/F_k$.  Conversely, if $\pi/\pi_k \cong F/F_k$, then the quotient map above will be an isomorphism (since $F/F_k$ is nilpotent and hence Hopfian, as are all finitely generated nilpotent groups \cite[Theorem 5.5]{MKS}),  and so $\ell_i^k\in F_k$ for all $i$.  Therefore 
$$
\{k \st \ell_i^k\in F_k \textup{ for all } i\} \ = \ \{k \st F/F_k\cong\pi/\pi_k\}
$$
and the formula for $\mil L$ follows from Lemma~\ref{lem:milnor1}. 
\end{proof}

\begin{proof}[of Theorem~$\ref{thm:main}$]
By definition $\mil M\ge\mil L$, so we must prove $\mil M\le\mil L$.  Let $J$ be any framed link with $\mil J\ge\mil L$ whose surgery produces $M$.  It suffices to show $\mil L \ge \mil J$.    Since $L$ is a diagonal link, $J$ must be as well.  Let $J_0$ be the zero-framed sublink of $J$.  Then $\mil{J_0}\ge\mil J$, and $M$ is surgery on $J_0$ in the homology sphere obtained by surgery on $J-J_0$ (all of whose framings are $\pm1$ since $H_1M$ is torsion free).   But Lemma~\ref{lem:milnor2} shows that $\mil L$ is characterized by a property of the fundamental group of $M$, and so will be the same for any other link whose zero surgery produces $M$, such as $J_0$.  Therefore $\mil L = \mil{J_0} \ge \mil J$ as desired.
\end{proof}

The following characterization of the Milnor degree of a $3$-manifold with torsion free homology in terms of the lower central series of its fundamental group is an immediate consequence of Lemma~\ref{lem:milnor2} and Theorem~\ref{thm:main}.

\begin{theorem}  Let $M$ be a $3$-manifold with first homology $\bz^r$ and $F$ be a free group of rank $r$.  Then $\mil M \ = \ \sup\,\{k \st \pi_1M/(\pi_1M)_k \cong F/F_k\}$. \hfill$\square$
\end{theorem}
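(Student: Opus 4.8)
The plan is to chain together the three preceding results; the statement is labeled an immediate consequence precisely because each link in the chain has already been established. First I would invoke Corollary~\ref{cor:torfree}: since $H_1M\cong\bz^r$ is torsion free, $M$ can be realized as zero surgery on some $r$-component diagonal link $L$ in an integral homology sphere $\Sigma$. This is the only point at which the hypothesis $H_1M\cong\bz^r$ (as opposed to an arbitrary $3$-manifold) enters, and it is exactly what places us in the hypotheses of both Theorem~\ref{thm:main} and Lemma~\ref{lem:milnor2}.

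Next I would apply Theorem~\ref{thm:main} to this surgery description to obtain $\mil M=\mil L$, reducing the manifold invariant to the corresponding link invariant. Then Lemma~\ref{lem:milnor2}, applied to the same $L$ with $\pi=\pi_1M$ and $F$ free of rank $r$, gives
$$
\mil L \ = \ \sup\,\{k \st F/F_k\cong\pi/\pi_k\}.
$$
Combining the two equalities yields $\mil M=\sup\{k \st \pi_1M/(\pi_1M)_k\cong F/F_k\}$, which is the assertion.

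There is essentially no obstacle here: the argument is a one-line concatenation once the ingredients are in hand. The one conceptual point worth flagging is \emph{well-definedness} of the right-hand side. A priori it is extracted from a particular diagonal surgery link $L$, yet the formula in the theorem is phrased purely in terms of $\pi_1M$. What guarantees independence of the auxiliary link is precisely Theorem~\ref{thm:main}, whose substantive content (via Lemma~\ref{lem:milnor2}) is that $\mil L$ is governed by the lower central series of $\pi_1M$ alone, hence is the same for any diagonal link whose zero surgery produces $M$. Thus the supremum is a genuine invariant of $M$, and no separate verification is required beyond the cited results.
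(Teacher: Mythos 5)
Your proposal is correct and matches the paper's intended argument exactly: the theorem is stated as an immediate consequence of Lemma~\ref{lem:milnor2} and Theorem~\ref{thm:main}, with Corollary~\ref{cor:torfree} supplying the required diagonal surgery presentation, which is precisely the chain you assemble. Your remark on well-definedness of the right-hand side is a fair observation, though as you note it is already subsumed in Theorem~\ref{thm:main}.
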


By Proposition 6.8 in \cite{CGO}, this lower central series condition is also a characterization of the {\itb Massey degree} $\mas M$ of $M$, defined to be the length $k$ of the first non-vanishing Massey product $\langle x_1,\dots,x_k\rangle$ of $1$-dimensional cohomology classes in $M$, or $\infty$ if all such Massey products vanish:

\begin{corollary} If $M$ is a $3$-manifold with torsion free homology, then its Milnor degree $\mil M$ is equal to its Massey degree $\mas M$. \hfill$\square$
\end{corollary}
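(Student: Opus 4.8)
The plan is to deduce the corollary by splicing together the theorem immediately preceding it with the cited Proposition~6.8 of \cite{CGO}, so that the whole content reduces to recognizing that two different-looking invariants are each computed by the same lower central series condition. First I would record that for a closed oriented $3$-manifold, having torsion free homology forces $H_1M\cong\bz^r$ with $r=b_1(M)$ (torsion free $H_1$ is free abelian; Poincaré duality and universal coefficients then make $H_2M$ free as well, so ``torsion free homology'' and ``$H_1M$ free'' coincide here). This places $M$ under the hypothesis of the preceding theorem, giving
$$
\mil M \ = \ \sup\,\{k \st \pi_1M/(\pi_1M)_k \cong F/F_k\}
$$
for $F$ free of rank $r$. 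Thus everything reduces to showing that this same supremum computes the Massey degree $\mas M$.

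Next I would invoke Proposition~6.8 of \cite{CGO}, which asserts exactly that the isomorphism type of the nilpotent quotient $\pi_1M/(\pi_1M)_k$ detects the Massey degree: the condition $\pi_1M/(\pi_1M)_k\cong F/F_k$ holds precisely when the relevant Massey products of $1$-dimensional cohomology classes vanish through the appropriate range, so that $\mas M=\sup\{k \st \pi_1M/(\pi_1M)_k\cong F/F_k\}$. The mechanism one expects behind such a statement is Stallings-type \cite{St}: the isomorphism $\pi_1M/(\pi_1M)_k\cong F/F_k$ is governed by the image of $H_2M$ in $H_2$ of the free nilpotent group $F/F_k$, and this image is precisely what the Massey products measure; because $F$ is free, the image is trivial exactly when those Massey products vanish. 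Granting Proposition~6.8, the two displayed expressions for $\mil M$ and $\mas M$ are identical suprema, and equality $\mil M=\mas M$ follows at once, with the infinite case handled uniformly (all $\bar\mu$-invariants vanishing, all Massey products vanishing, and $\pi_1M/(\pi_1M)_k\cong F/F_k$ for every $k$ are mutually equivalent).

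Since the substantive topology is already packaged in the preceding theorem and in \cite{CGO}, the only genuine obstacle I anticipate is bookkeeping with conventions: the Milnor degree counts \emph{degree} (length minus one) of the first nonvanishing $\bar\mu$-invariant, whereas the Massey degree counts the \emph{length} of the first nonvanishing Massey product, so a naive comparison risks an off-by-one. The resolution is that both the preceding theorem and Proposition~6.8 of \cite{CGO} are normalized against the \emph{same} quantity $\sup\{k \st \pi_1M/(\pi_1M)_k\cong F/F_k\}$; the conventions have therefore already been aligned at the level of the lower central series, and one obtains equality on the nose rather than mere agreement up to a shift. Verifying that this alignment is literally the one used in both sources is the step I would check most carefully before declaring the proof complete.
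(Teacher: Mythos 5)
Your proof is correct and is essentially the paper's own argument: the corollary is obtained by combining the preceding theorem's formula $\mil M=\sup\{k \st \pi_1M/(\pi_1M)_k\cong F/F_k\}$ with Proposition~6.8 of \cite{CGO}, which characterizes the Massey degree by the same lower central series condition. Your extra care about the degree-versus-length convention is a reasonable check but introduces nothing beyond what the paper already packages into the citation.
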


\vskip-.2in

\begin{remark}
\textup{The Milnor degree is also related to the notion of {\it $n$-surgery equivalence} \cite{CGO}, the equivalence relation $\sim_n$ on $3$-manifolds $M$ generated by $\pm1$-framed surgery on links whose components lie in $(\pi_1M)_{n}$ (for $n\ge2$).  In particular, Theorem 6.10 in \cite{CGO} states that if $H_1M\cong\bz^r$, then $M \sim_n \#^r S^1\times S^2$ if and only if $\mas M\ge 2n-1$.  It follows that
$$
\lceil\textstyle\frac12 \mil M\rceil = \sup\,\{n \st M \sim_n \#^r S^1\times S^2\}.
$$}
\end{remark}

\vskip-.2in
\vskip-.2in

\part{Modifications in the presence of torsion}

Some of the ideas used in the proof of Theorem~\ref{thm:main} carry over in the presence of torsion, and can be used to extend the realization result, Corollary\ref{cor:realization1} (this result can also be deduced easily from \cite[Theorem 7.3]{St}). 

\begin{theorem}\label{thm:mainextension} Let $M$ be a $3$-manifold
with first Betti number $r$ and $F$ be a free group of rank $r$.   Set $\pi = \pi_1M$.  Then any map $\theta:F\to \pi$ that induces an isomorphism on $H_1/${\rm Torsion} induces a monomorphism
$$
\theta_n:F/F_k\to \pi/\pi_k
$$
for every $k$ not exceeding the Milnor invariant $\mil M$.
\end{theorem}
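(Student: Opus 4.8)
The plan is to reduce the statement to a single convenient surgery presentation of $M$ and then exhibit a retraction that splits $\theta_k$ after abelianizing. The case $k=1$ is trivial, since $F/F_1$ and $\pi/\pi_1$ are both trivial, so fix a finite $k$ with $2\le k\le\mil M$ and write $\theta_k\colon F/F_k\to\pi/\pi_k$ for the induced map. Because $\mil M$ is by definition a supremum of link degrees, and because the Milnor degree may be computed in any integral homology sphere (the homological invariance recorded in the introduction), I can choose a framed link $L$ in some integral homology sphere $\Sigma$ with $M=\Sigma_L$ and $\mil L\ge k$. Since $\mil L\ge2$, the pairwise linking numbers vanish, so $L$ is diagonal with linking matrix $\mathrm{diag}(f_1,\dots,f_s)$; its nullity equals the first Betti number $r$ of $M$. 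After reordering I may assume $f_1=\cdots=f_r=0$ and $f_i\ne0$ for $i>r$, with meridians $m_1,\dots,m_s$, so that (since $H_1M\cong\coker(\mathrm{diag}(f_i))$) the classes of $m_1,\dots,m_r$ form a basis of $H_1M/\mathrm{Torsion}$.

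The first real step is to record a clean presentation of $\pi/\pi_k$. Let $G=\pi_1(\Sigma-L)$ and let $\widehat F$ be the free group of rank $s$ on the meridians, with Milnor words $\ell_i^k$ as in \S2. Starting from $G/G_k\cong\langle m_1,\dots,m_s\st \widehat F_k,\,[m_i,\ell_i^k]\rangle$, I adjoin the surgery relators ($\ell_i$ for $i\le r$ and $m_i^{f_i}\ell_i$ for $i>r$) and pass to the maximal class-$(k-1)$ nilpotent quotient, which simply adjoins $\widehat F_k$ to the relator set. Here is where the hypothesis $k\le\mil M$ enters: by Lemma~\ref{lem:milnor1}, $\mil L\ge k$ forces $\ell_i^k\in\widehat F_k$ for every $i$. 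Consequently, modulo $\widehat F_k$ the commutator relators and the relators $\ell_i$ ($i\le r$) become trivial, while $m_i^{f_i}\ell_i\equiv m_i^{f_i}$, yielding
$$
\pi/\pi_k\ \cong\ \langle\, m_1,\dots,m_s \st \widehat F_k,\ m_{r+1}^{f_{r+1}},\dots,m_s^{f_s}\,\rangle.
$$

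Next I construct the retraction. Writing $F=\langle x_1,\dots,x_r\rangle$, define $\rho\colon\pi/\pi_k\to F/F_k$ by $m_i\mapsto x_i$ for $i\le r$ and $m_i\mapsto1$ for $i>r$; this is well defined because it carries $\widehat F_k$ into $F_k$ and kills each $m_i^{f_i}$. Now let $\theta\colon F\to\pi$ be \emph{any} homomorphism inducing an isomorphism on $H_1/\mathrm{Torsion}$, and consider the composite $\rho\circ\theta_k\colon F/F_k\to F/F_k$. On abelianizations $\rho_*$ kills torsion and sends the classes of $m_1,\dots,m_r$ to $x_1,\dots,x_r$, so $\rho_*\circ\theta_*$ is precisely the matrix in $GL_r(\bz)$ representing the $H_1/\mathrm{Torsion}$ isomorphism; in particular $\rho\circ\theta_k$ is an automorphism of $(F/F_k)^{\mathrm{ab}}=\bz^r$.

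Finally I invoke two standard facts about the finitely generated nilpotent group $F/F_k$: an endomorphism that is surjective on the abelianization is itself surjective, and a surjective endomorphism of a finitely generated nilpotent group is an automorphism (Hopficity). Hence $\rho\circ\theta_k$ is an automorphism, and therefore $\theta_k$ is a monomorphism, as claimed. The genuinely delicate point is handling an \emph{arbitrary} $\theta$ rather than the tautological meridian map $x_i\mapsto m_i$: the retraction $\rho$ together with the abelianization/Hopficity argument is exactly what collapses the general case to the standard one, and obtaining the clean presentation of $\pi/\pi_k$ (where the bound $k\le\mil M$ is consumed, via Lemma~\ref{lem:milnor1}) is the other essential ingredient.
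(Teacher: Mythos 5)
Your proof is correct and follows essentially the same route as the paper's: the same presentation of $\pi/\pi_k$ obtained from a diagonal surgery link of degree $\ge k$ (with the bound $k\le\mil M$ consumed via Lemma~\ref{lem:milnor1} to discard the longitude relators), the same retraction killing the nonzero-framed meridians, and the same surjectivity-plus-Hopficity argument for the composite endomorphism of $F/F_k$. The only cosmetic differences are that you route the choice of surgery link through the homological invariance of $\mil M$ (the paper takes $L\subset S^3$ directly) and that you spell out the abelianized matrix computation explicitly.
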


\vskip-.2in

\begin{proof} The conclusion is trivially true for $k=1$, so assume that $\mil{M}\ge k>1$.  This means that for some $s\ge r$, $M$ can be obtained by surgery on an $s$-component diagonal link $L$ in $S^3$ with exactly $r$ zero-framed components and with $\mil L\ge k$.  

A presentation for $\pi/\pi_k$ can be calculated along the same lines as in the proof of Lemma~\ref{lem:milnor2}.  First set $G=\pi_1(S^3-L)$, and let $E$ be the free group of rank $s$ generated by the meridians $m_i$ of $L$, the first $r$ of which correspond to the zero framed components of $L$.  As above 
$$
G/G_k\ \cong \ \langle m_1,\dots,m_s \st E_k, [m_i,w_i]\rangle
$$
for any choice of  Milnor words $w_i = \ell_i^k$ representing the longitudes $\ell_i$.   By Lemma~\ref{lem:milnor1}, $w_i\in E_k$, and so the commutator relations can be ignored.   The surgery adds relations $w_i=1$ for $i\le r$ and $w_im_i^{n_i}=1$ for $i>r$, where the $n_i$ are the non-zero framings of the last $s-r$ components of $L$.  Since all $w_i\in E_k$, we conclude that
$$
\pi/\pi_k\ \cong \ \langle m_1,\dots,m_s \st E_k, m_{i}^{n_{i}} \ \textup{for}\ i>r\rangle.
$$

Now killing the $m_i$ for $i>r$ defines a surjection $\psi:\pi/\pi_k\to F/F_k$, 
where $F$ is free on the $m_i$ for $i\le r$, that induces an isomorphism on $H_1/$Torsion.  Therefore the composition
$$
\phi = \psi\circ\theta_k:F/F_k\to \pi/\pi_k \to F/F_k
$$
induces an isomorphism on $H_1$.   Since $F/F_k$ is nilpotent, $\phi$ is surjective\foot{This is a well-known property of nilpotent groups. The idea is that if say $\{\phi(x_i)\}$ generates the target $F/F_2$ then any commutator can be written as a product of conjugates of terms of the form $[a\phi(x_i),b\phi(x_j)]^{\pm 1}$ where $a,b\in F_2$. Modulo $F_3$ the conjugations and the $a$ and $b$ can be ignored using basic properties of commutators. In this way it is inductively shown that $F_{k}/F_{k+1}$ is generated by a set of $k^{th}$-order commutators in the set $\{\phi(x_i)\}$. Thus $\{\phi(x_i)\}$ generates $F/F_k$ for any $k$.}  Moreover $F/F_k$ is Hopfian, since it is finitely generated nilpotent, and so $\phi$ is in fact an isomorphism.  This implies that $\theta_k$ is a monomorphism.
\end{proof}

\begin{corollary}\label{cor:connsum} If $M= N\#R$ where $H_1N$ is torsion free and $R$ is a rational homology sphere, then $\mil{M}\leq \mil{N}$.
\end{corollary}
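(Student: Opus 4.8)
The plan is to deduce the inequality directly from the extension theorem (Theorem~\ref{thm:mainextension}) applied to $M$, combined with the lower-central-series characterization of $\mil N$ that is available because $N$ has torsion-free homology. Write $\pi=\pi_1M$, $\nu=\pi_1N$ and $\rho=\pi_1R$. By van Kampen's theorem the connected sum gives $\pi\cong\nu*\rho$, and since $R$ is a rational homology sphere $H_1R$ is finite; hence $H_1M\cong H_1N\oplus H_1R$ has torsion subgroup $H_1R$ and torsion-free quotient $H_1M/\mathrm{Torsion}\cong H_1N\cong\bz^r$. In particular the first Betti number of $M$ equals the rank $r$ of $H_1N$, and the inclusion $H_1N\hookrightarrow H_1M$ descends to an isomorphism $H_1N\cong H_1M/\mathrm{Torsion}$.

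First I would set up the comparison map. Let $F$ be free of rank $r$, choose a homomorphism $\theta_N\colon F\to\nu$ inducing an isomorphism on $H_1$ (send the free generators to elements of $\nu$ representing a basis of $H_1N$), and compose with the inclusion $\iota\colon\nu\hookrightarrow\pi$ of the free factor to obtain $\theta=\iota\circ\theta_N\colon F\to\pi$. By the homology computation above, $\theta$ induces an isomorphism on $H_1/\mathrm{Torsion}$, so Theorem~\ref{thm:mainextension} applies and tells us that the induced map $\theta_k\colon F/F_k\to\pi/\pi_k$ is a monomorphism for every $k\le\mil M$.

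The key step is to transfer this injectivity to $\nu$. Passing to the quotients by the $k$-th lower central subgroups, the factorization $\theta=\iota\circ\theta_N$ becomes $\theta_k=\iota_k\circ(\theta_N)_k$, so the injectivity of $\theta_k$ forces $(\theta_N)_k\colon F/F_k\to\nu/\nu_k$ to be injective for every $k\le\mil M$. On the other hand $(\theta_N)_k$ is surjective for all $k$: $\theta_N$ is onto on $H_1$, and a homomorphism that is surjective on abelianizations is surjective on every lower central quotient of a finitely generated nilpotent group --- this is exactly the generation fact recorded in the footnote to the proof of Theorem~\ref{thm:mainextension}. Hence $(\theta_N)_k$ is an isomorphism for each $k\le\mil M$. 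Finally, because $H_1N$ is torsion free, Corollary~\ref{cor:torfree} realizes $N$ as zero surgery on a diagonal link in a homology sphere, so Lemma~\ref{lem:milnor2} together with Theorem~\ref{thm:main} gives $\mil N=\sup\{k: F/F_k\cong\nu/\nu_k\}$; since we have just produced such isomorphisms for all $k\le\mil M$, we conclude $\mil N\ge\mil M$, as required.

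I expect no single step to be hard once Theorem~\ref{thm:mainextension} is in hand; the real content is bookkeeping in the free-product structure. The one point that must be handled with care is that $\theta$ should be chosen to factor through the free factor $\nu$ --- this is what produces the factorization $\theta_k=\iota_k\circ(\theta_N)_k$ and lets the monomorphism supplied by Theorem~\ref{thm:mainextension} descend to $\nu$. Pairing that injectivity with the automatic surjectivity on nilpotent quotients is precisely what upgrades a bare embedding into the isomorphism needed to read off $\mil N$ from the torsion-free characterization.
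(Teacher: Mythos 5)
Your proof is correct and follows essentially the same route as the paper's: compose a map $F\to\pi_1N$ inducing an isomorphism on $H_1$ with the free-factor inclusion into $\pi_1M$, apply Theorem~\ref{thm:mainextension} to get injectivity of $F/F_k\to\pi_1M/(\pi_1M)_k$, deduce injectivity of $F/F_k\to\pi_1N/(\pi_1N)_k$, pair it with the automatic surjectivity on nilpotent quotients, and read off $\mil N\ge\mil M$ from the torsion-free characterization via Lemma~\ref{lem:milnor2} and Theorem~\ref{thm:main}. The only cosmetic difference is that the paper takes the meridional map of a zero-surgery presentation of $N$ and quotes its epimorphicity from the proof of Lemma~\ref{lem:milnor2}, while you use an arbitrary lift of a basis of $H_1N$ and the general nilpotent-generation fact from the footnote; these are interchangeable.
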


\vskip -.1in

\begin{proof} Suppose that $M$ has Milnor degree $d$ and first Betti number $r$.  Let $G=\pi_1N$, \break $P=\pi_1R$, $\pi=\pi_1M$ and $F$ be the free group of rank $r$. Then $\pi\cong G*P$ so in particular there is a map $i:G\to \pi$.

Express $N$ as zero surgery on a link $L$ in a homology sphere $\Sigma$. Choose a meridional map \break $j:F\to G$. Then the composition $\theta=i\circ j:F\to \pi$ induces an isomorphism on $H_1/$Torsion.  Thus for any $k<d$, the composition
$
F/F_k\stackrel{j_k}\to G/G_k\stackrel{i_k}\to \pi/\pi_k
$
is a monomorphism, by Theorem~\ref{thm:mainextension}.  It follows that $j_k$ is a monomorphism.  But we saw in the proof of Lemma~\ref{lem:milnor2} that $j_k$ is always an epimorphism, and so it is in fact an isomorphism.  Therefore $\mil{L}\geq d$, and so by Theorem~\ref{thm:main}, $\mil{N} =\mil{L}\geq d = \mil{M}$ as claimed.
\end{proof}

We conclude with a strengthening of the realization result, Corollary \ref{cor:realization1}, by proving the existence of $3$-manifolds of arbitrary Milnor degree $\mu>1$ with given first homology of rank $r>2$, or $r=2$ with three exceptions.  Note that we do not address the case $\mu=1$ because of the number theoretic issues raised in \S1.

\begin{theorem}\label{thm:realization1}
For any finitely generated abelian group $A$ of rank $r\ge2$ and any $d\ge2$ $($with the exception of $2,4$ or $6$ when $r=2)$ there exists a $3$-manfold $M$ with $H_1(M)=A$ and Milnor degree $d$. 
\end{theorem}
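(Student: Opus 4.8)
The plan is to realize $A$ as the first homology of a connected sum $M = N\# R$, where $N$ carries the free part together with the prescribed Milnor degree, and $R$ is a rational homology sphere carrying the torsion. Write $A \cong \bz^r\oplus T$ with $T$ a finite abelian group. By Corollary~\ref{cor:realization1}, the hypotheses on $r$ and $d$ are exactly what is needed to guarantee that $d\in\mil{\bz^r}$: for $r\ge 3$ this set is $\bn-\{1\}$, and for $r=2$ it is $\bn-\{1,2,4,6\}$, whose gaps above $1$ are precisely the excluded values. Hence there is a $3$-manifold $N$ with $H_1N\cong\bz^r$ and $\mil N = d$.

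Next I would produce a rational homology sphere $R$ with $H_1R\cong T$ and infinite Milnor degree. Writing $T\cong\bz_{n_1}\oplus\cdots\oplus\bz_{n_k}$, the manifold $R=L(n_1,1)\#\cdots\#L(n_k,1)$ serves: it has the correct first homology and, as noted in the introduction, infinite Milnor degree, being surgery on an unlink with framings $n_1,\dots,n_k$. Setting $M=N\# R$ then gives $H_1M\cong H_1N\oplus H_1R\cong\bz^r\oplus T\cong A$, as required.

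It remains to show $\mil M = d$, which I would do by sandwiching. For the lower bound, the connected-sum inequality from the introduction gives $\mil M\ge\min(\mil N,\mil R)=\min(d,\infty)=d$. For the upper bound, I note that $H_1N\cong\bz^r$ is torsion free and $R$ is a rational homology sphere, so Corollary~\ref{cor:connsum} applies directly to yield $\mil M\le\mil N=d$. Combining the two inequalities gives $\mil M = d$.

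The argument has no serious obstacle once the earlier results are in hand, since it is essentially a packaging of them. The only points requiring care are checking that the degrees excluded in the statement match exactly the gaps in $\mil{\bz^r}$ recorded in Corollary~\ref{cor:realization1}, and confirming that the torsion summand is housed in a \emph{rational homology sphere}, so that the hypotheses of Corollary~\ref{cor:connsum} (torsion-free free factor, rational homology sphere complement) are genuinely met. The real content—the existence of $N$ with $H_1N\cong\bz^r$ of degree $d$, together with the upper bound $\mil{N\#R}\le\mil N$ on the degree of such a connected sum—has already been proved, so the theorem follows by assembling these two facts around the connected-sum inequality.
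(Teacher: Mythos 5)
Your proposal is correct and follows essentially the same route as the paper: take $N$ with $H_1N\cong\bz^r$ and $\mil N=d$ (via Corollary~\ref{cor:realization1}), set $R$ to be the connected sum of lens spaces $L(n_i,1)$ carrying the torsion, bound $\mil{N\#R}$ below by the split-union/connected-sum inequality and above by Corollary~\ref{cor:connsum}. The only cosmetic difference is that the paper phrases the lower bound directly in terms of surgery on the disjoint union of the link $L$ with a $k$-component unlink, which is the same argument underlying the inequality you invoke from the introduction.
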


\vskip  -.1in
\begin{proof}  Fix $r\ge2$ and $d\ge2$, except $d\ne2,4$ or $6$ when $r=2$.  By Theorem~\ref{thm:main} and Corollaries~\ref{cor:torfree} and \ref{cor:realization1}, there exists a zero framed diagonal link $L$ in a homology sphere $\Sigma$ such that $\mil L =d$ and $H_1(\Sigma_L)=\bz^r$.  Set $N =  \Sigma_L$ and $R = L(n_1,1)\#\cdots\#L(n_k,1)$, where $A = \bz^r\oplus \bz_{n_1}\oplus\cdots\oplus\bz_{n_k}$.  Then $M = N\# R$ is surgery on the disjoint union of $L$ with a $k$-component unlink
and so
$$
\mil{M}\ge \mil{L} = d.
$$
But by Corollary~\ref{cor:connsum} and Theorem~\ref{thm:main},
$$
\mil{M}\le \mil{N} = \mil L = d. 
$$
Hence $\mil M=d$.
\end{proof}

\vskip-.3in
\vskip-.1in

\section{Torsion and quantum invariants}   

It was noted in the introduction that every integral homology sphere has infinite Milnor degree.  In contrast, there exist rational homology spheres of arbitrary Milnor degree, as will be seen below using quantum topology techniques.  In fact, the same techniques will yield examples of $3$-manifolds with any prescribed Milnor degree and first Betti number, complementing the realization results of the previous section.

\part{Quantum $p$-orders}

The main result in \cite{CM1} relates the Milnor degree $\mil{M}$ of any 3-manifold $M$ with its {\itb\boldmath quantum $p$-order} $\o_p(M)$ (we assume that the reader is familiar with \cite{CM1}, and adopt the notation used there) and its mod $p$ first Betti number 
$$
b_p(M) = \rk H_1(M;\bz_p)
$$ 
for any prime $p\ge5$.   For notational economy we use a rescaling $\oh_p$ of $\o_p$ by dividing by $\o_p(S^1\times S^2) = (p-3)/2$.  Then by \cite[\S4.3]{CM1},
$$
\oh_p \ge b_p (\mu-1)/(\mu+1).
$$   
Solving for $\mu$ yields a useful upper bound for the Milnor degree:

\begin{theorem}\label{thm:porder}
For any prime $p>3$, the Milnor degree $\mu$ of $3$-manifolds satisfies the inequality
$
\mu  \le (b_p+\oh_p)/(b_p-\oh_p)
$
where $b_p$ and $\oh_p$ are as defined above.
\hfill$\square$
\end{theorem}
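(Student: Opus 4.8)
The entire quantum-topological content is already packaged in the inequality
$$\oh_p \ge b_p\,(\mu-1)/(\mu+1)$$
quoted from \cite[\S4.3]{CM1}, so the plan is simply to solve this for $\mu$ by elementary algebra. First I would clear the denominator: since the Milnor degree satisfies $\mu\ge 1$, the factor $\mu+1$ is strictly positive, so multiplying through preserves the direction of the inequality and yields $\oh_p(\mu+1)\ge b_p(\mu-1)$.

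Next I would expand both sides and collect all terms carrying $\mu$ on the right and the remaining terms on the left, which gives $b_p+\oh_p \ge \mu\,(b_p-\oh_p)$. The claimed estimate $\mu \le (b_p+\oh_p)/(b_p-\oh_p)$ then drops out immediately upon dividing by $b_p-\oh_p$, provided this quantity is positive.

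The only step demanding any care — and hence the ``hard part,'' modest as it is — is the sign of $b_p-\oh_p$, which governs both the direction of the final division and whether the conclusion carries any information. In the regime $b_p>\oh_p$ the division is legitimate and produces a finite upper bound on $\mu$; this is precisely the situation in which the theorem is meant to be applied. In the complementary regime $b_p\le\oh_p$ the right-hand side is nonpositive, the rearranged inequality $b_p+\oh_p\ge\mu(b_p-\oh_p)$ imposes no ceiling on $\mu$, and the manifold may well satisfy $\mu=\infty$. This is entirely consistent with the source inequality, since $(\mu-1)/(\mu+1)=1-2/(\mu+1)<1$ for every finite $\mu$, so that $\oh_p\ge b_p$ already forces $\oh_p> b_p(\mu-1)/(\mu+1)$ with no constraint on $\mu$. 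I would therefore read the statement as asserting the bound in the former regime, observing that the limiting value $\mu=\infty$ corresponds to $\oh_p\ge b_p$.
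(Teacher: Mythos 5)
Your proposal is correct and matches the paper's own argument, which consists entirely of the phrase ``Solving for $\mu$'' applied to the quoted inequality $\oh_p \ge b_p(\mu-1)/(\mu+1)$; the algebraic rearrangement you carry out is exactly that step. Your added remark on the sign of $b_p-\oh_p$ is a sensible reading of how the bound is meant to be used, consistent with the paper's applications where $b_p>\oh_p$.
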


To apply this result, one must restrict to $3$-manifolds whose $p$-orders can be calculated, or at least estimated. Among these are the manifolds obtained by surgery on iterated Bing doubles of the Hopf link, whose $p$-orders are computable using the methods of \cite{CM1} as shown below. 

\part{Bing doubling} 

The (untwisted) {\itb Bing double} of a link $L$ along one of its components $K$ is obtained from $L$ as follows:  First add a $0$-framed pushoff $K'$ of $K$, and then replace $K\cup K'$ with a pair $K_1\cup K_2$ of linked unknotted components as shown in Figure~\ref{fig:bing}.

\begin{figure}[h!]\label{fig:bing}
\setlength{\unitlength}{1pt}
\begin{picture}(280,35)
\put(0,20){\includegraphics[height=15pt]{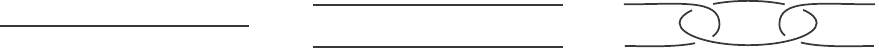}}
\put(30,0){$K$}  
\put(120,0){$K\cup K'$}  
\put(215,0){$K_1\cup K_2$}  
\newline\newline
\end{picture}
\caption{Bing doubling}
\end{figure}

To understand how the quantum invariants of surgery on a (framed) link are affected by Bing doubling, we shall appeal to the following quantum calculation:

\vskip.1in

\begin{lemma}\label{lem:porder}
If $L$ is a framed link with a $0$-framed component $K$, and $L'$ is the Bing double of $L$ along $K$ with both new components $0$-framed, then $\oh_p(S^3_{L'}) = \oh_p(S^3_L) + 1$.
\end{lemma}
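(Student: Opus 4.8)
The plan is to compute the quantum $p$-order of $S^3_{L'}$ directly from the surgery formula and compare it to that of $S^3_L$, exploiting the fact that the only change to the surgery data is a local one: the $0$-framed component $K$ is replaced by the three-component tangle $K_1\cup K_2$ together with the extra $0$-framed pushoff in the Bing construction. Since $\oh_p$ is a rescaling of $\o_p$ by the factor $\o_p(S^1\times S^2)=(p-3)/2$, and since both $L$ and $L'$ are framed links whose surgery yields $3$-manifolds, I would work entirely within the framework of \cite{CM1}, treating the quantum $SO(3)$-invariant as a sum over colorings of the components by the allowed labels (the even labels, or the standard index set at level $p$). The whole computation is then about how adjoining the Bing-double tangle along the $0$-framed meridian of $K$ modifies the colored link invariant as a function of the color on $K$.

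The key steps, in order, are as follows. First I would record the colored evaluation of the local Bing-doubling tangle: replacing a $0$-framed component $K$ colored by $c$ with the pair $K_1\cup K_2$ (each $0$-framed, and with the auxiliary $0$-framed pushoff) multiplies the colored bracket by a specific factor depending on $c$, which one extracts from the skein-theoretic picture in Figure~\ref{fig:bing} using the standard fusion and encircling relations. The point is that a $0$-framed circle linking $K_1$ and $K_2$ acts, via the encircling formula, to project onto a single color channel, so the sum over the colors of the two new components collapses and produces an extra factor equal to a quantum dimension $[c]$ (up to the normalizing constants). Second, I would feed this local factor into the global state sum for $\o_p(S^3_{L'})$, so that the full invariant of $S^3_{L'}$ becomes the invariant of $S^3_L$ with each summand weighted by this extra $c$-dependent factor. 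Third, I would track the effect of this weighting on the $p$-order: since $\o_p$ measures the largest power of the relevant cyclotomic parameter dividing the invariant, and the extra quantum-dimension factor contributes exactly one additional such power (matching the contribution of an $S^1\times S^2$ summand, after the rescaling by $(p-3)/2$), one concludes $\oh_p(S^3_{L'})=\oh_p(S^3_L)+1$.

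I expect the main obstacle to be the second and third steps taken together: namely, showing that the local weighting factor contributes \emph{exactly} one to $\oh_p$ and not more or less. The issue is that the colored bracket is an algebraic sum over many color channels, and $p$-order is not additive under summation -- it can jump if leading terms cancel. So the real work is to verify that the extra factor introduced by Bing doubling has a controlled $p$-adic valuation uniformly across the relevant colorings, so that it pulls out cleanly as a single factor of the quantum dimension whose valuation is $\o_p(S^1\times S^2)=(p-3)/2$ in the unscaled normalization, hence $1$ after rescaling. This is precisely the kind of valuation bookkeeping that the methods of \cite{CM1} are designed to handle, so I would lean on the divisibility estimates and the computation of $\o_p(S^1\times S^2)$ established there, reducing the claim to a local statement about the $p$-order of the Bing-doubling factor and then invoking those estimates to rule out cancellation.
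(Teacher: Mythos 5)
Your overall strategy---reduce to the $p$-bracket, evaluate the Bing tangle locally as a function of the colors, then do valuation bookkeeping to show the extra factor contributes exactly $p-3$ to $\o_p$ (hence $1$ to $\oh_p$)---has the right shape, and you have correctly put your finger on the crux: $\o_p$ is not additive over the sum on color channels, so one must show that the reweighting introduced by Bing doubling is uniform enough across colors that no cancellation or unexpected gain occurs. But there are two genuine gaps. First, your local evaluation is wrong as stated: the $\omega$-colored component encircling the two oppositely oriented strands of the other does not simply project onto one channel and emit a quantum dimension $[c]$. Since $[c]=c+O(h)$, such a factor is a unit, $\o_p([c])=0$, and a per-channel weight of $[c]$ would yield $\oh_p(S^3_{L'})=\oh_p(S^3_L)$ --- contradicting the very statement you are proving (note also the internal tension in your write-up: a factor cannot simultaneously be ``a quantum dimension $[c]$'' and have valuation $(p-3)/2$). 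What actually happens (equation (24) of \cite{CM1}) is that the Bing pair with colors $[k]k\cup\omega$ is \emph{equivalent} to the original parallel pair $K\cup K'$ colored $k\cup k$ together with a \emph{split} $\omega$-colored unknot; the $+1$ in $\oh_p$ comes entirely from that split unknot (an $S^1\times S^2$ factor of the bracket), not from a dimension attached to each channel.

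Second, the decisive computation that rules out cancellation is only promised, not performed. After undoing the Bing clasp one must cable the parallel pair $k\cup k$ back to a single component, which produces $\sum_{\odd j<p}\frac12(p-j)\,(L_0\cup K\sqcup\medcirc,\lambda_0\cup j\cup\omega)$, and the whole proof hinges on the congruence $\frac12(p-j)=\frac{p-1}{2}[j]+O(h)$: it shows that this reweighting is, to leading order, a color-independent unit times the $\omega'$-coloring of $K$, hence (by the symmetry principle of \cite{KM}) weakly equivalent to $(L\sqcup\medcirc,\lambda\cup\omega)$. Without this identity, or some equivalent uniform statement, the ``controlled valuation across colorings'' you invoke is precisely the unproven step and the argument does not close. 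To repair the proposal: replace the per-channel factor $[c]$ by the correct local model (Bing pair $\leadsto$ parallel pair plus split $\omega$-unknot), and supply the congruence above to justify pulling a single color-independent factor out of the sum.
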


\proof
By equation (21) in \cite{CM1}, it suffices to show 
$
\o_p\br{L'} = \o_p\br L + p-3
$ 
where
$$
\br L = \sum_{k<p/2} (a,k]\,J_{L,k}
$$
is the $p$-bracket of $L$ (see  \cite[\S1]{CM1} for the definitions of the framed quantum integers $(a,k]$ and the colored Jones polynomials $J_{L,k}$).   Here $a$ and $k>0$ are multi-indices of integers, specifying respectively the framings and colorings on the components of $L$.  

Allowing colorings from the group ring $\Lambda_p\bz$, as explained in \cite[\S5]{CM1}, the $p$-bracket $\br L$ can be written as a single colored Jones polynomial $J_{L,\lambda}$ for a suitable multi-index $\lambda$ of elements in $\Lambda_p\bz$, and similarly $\br{L'} = J_{L',\lambda'}$.  In particular $K$, $K_1$ and $K_2$ should all be colored with 
$$
\omega = \sum_{k<p/2} [k]\,k
$$
since they are $0$-framed,  and so setting $L_0=L-K$ we have
$(L,\lambda) = (L_0\cup K,\lambda_0\cup\omega)$ and $(L' ,\lambda' ) =  (L_0\cup K_1\cup K_2,\lambda_0\cup\omega\cup\omega)$.  In fact there is an alternative color that can be used for any (or all) of the $0$-framed components, namely
$$
\omega' = \sum_{\odd k<p} [k]\,k.
$$
For example, $J_{L_0\cup K,\lambda_0\cup\omega} = J_{L_0\cup K,\lambda_0\cup\omega'}$.  This is a consequence of the ``symmetry principle" established in \cite[\S4]{KM}.  

Now following \cite{CM1}, we say that two $\Lambda_p\bz$-colored framed links $(L_i,\lambda_i)$ (for $i=1,2$) are {\it equivalent}, written $(L_1,\lambda_1) \approx (L_2,\lambda_2)$, provided $J_{L_1,\lambda_1} = J_{L_2,\lambda_2}$, and extend this to an equivalence relation on the set of $\Lambda_p$-linear combinations of $\Lambda_p\bz$-colored framed links.  We also consider the notion of {\it weak equivalence} $(L_1,\lambda_1) \sim (L_2,\lambda_2)$, defined by the condition $\o_p(J_{L_1,\lambda_1}) = \o_p(J_{L_2,\lambda_2})$.  

Noting that $\o_p(J_{L\sqcup\raisebox{1pt}{$\scriptscriptstyle\bigcirc$},\lambda\cup\omega})=\o_p(J_{L,\lambda}) + p-3$, where $\sqcup\medcirc$ denotes the distant union with an unknot, it suffices to show 
$
(L',\lambda') \sim (L\sqcup\medcirc,\lambda\cup\omega).
$
This is seen by a sequence of (weak) equivalences.  First observe that
\begin{eqnarray*}
(L',\lambda') \ &=& \ \sum_{k<p/2}\ (L_0\cup K_1\cup K_2,\lambda_0\cup [k]k \cup\omega) \\
\ &\approx&\  \sum_{k<p/2}\ (L_0\cup K\cup K'\sqcup\medcirc,\lambda_0\cup k\cup k\cup\omega) \\
\ &\approx& \ \sum_{k<p/2}\sum_{\stackrel{\ \odd}{j<p}} (L_0\cup K\sqcup\medcirc,\lambda_0\cup j\cup\omega) \\
\ &=& \ \displaystyle\sum_{\odd j<p}{\textstyle\frac12}(p-j)\,(L_0\cup K\sqcup\medcirc,\lambda_0\cup j\cup\omega) \\
\end{eqnarray*}
where the first equivalence is a special case of equation (24) in \cite[\S5]{CM1}, the second follows from a well known cabling principle (see for example \cite[\S3.10]{KM}), and the last equality holds since each odd $j$ occurs exactly $\frac12(p-j)$ times in the double sum.    
These equivalences are illustrated below:

\begin{figure}[h!]
\setlength{\unitlength}{1pt}
\begin{picture}(310,40)
\put(30,10){\includegraphics[height=25pt]{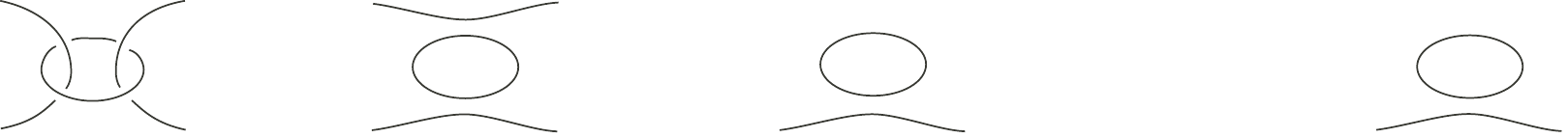}}
\put(0,20){$\sum_k [k]$}  
\put(43,10){\small $\omega$}
\put(52,35){\small $k$}
\put(65,20){$\ \approx \ \sum_k$}  
\put(112,35){\small $k$}
\put(112,5){\small $k$}
\put(127,20){\small $\omega$}
\put(130,20){$\ \ \,\approx \ \sum_{k,j}$}
\put(190,5){\small $j$}
\put(200,30){\small $\omega$}
\put(212,20){$= \ \sum_{j}\frac12(p-j)$}
\put(300,5){\small $j$}
\put(310,30){\small$\omega$}
\end{picture}
\end{figure}

Since $[j] = j + O(h)$ and $p=O(h)$ (in fact $O(h^{p-1})$), it follows that
$$
(p-j)/2 = m[j] + O(h)
$$
where $m=(p-1)/2$.  Thus the last sum is weakly equivalent to
$$
m \sum_{\odd j<p} (L_0\cup K\sqcup\medcirc,\lambda_0\cup [j]j\cup\omega) \ \ \sim \ \ (L_0\cup K\sqcup\medcirc,\lambda_0\cup\omega'\cup\omega) \\
$$
since $\o_p(m)=0$, which equals $(L\sqcup\medcirc,\lambda\cup\omega)$ by the ``symmetry principle" alluded to above. This completes the proof of the lemma.
\qed

\part{Milnor degrees of Bing double surgeries}

We now have the tools to calculate the Milnor degrees of many surgeries on iterated Bing doubles of the Hopf link $H$. In particular, let $H^d$ be the $(d-1)^\textup{st}$ iterated double of $H$ (so $H^1=H$, $H^2$ is the Borromean rings, etc.) and consider the associated framed link $L(n_0,\dots,n_d)$, where the $n_i$ are the framings.  Set 
$
M(n_0,\dots,n_d) = S^3_{L(n_0,\dots,n_d)},
$ 
the $3$-manifold obtained by surgery on $L(n_0,\dots,n_d)$.

\begin{proposition}\label{prop:bing} If $d>1$ and $n_0,\dots, n_d$ have a common prime factor $p>3$, then $M(n_0,\dots,n_d)$ has Milnor degree $d$.
\end{proposition}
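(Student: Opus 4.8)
The plan is to pin $\mu=\mil{M(n_0,\dots,n_d)}$ between $d$ and $d$, with the lower bound coming from the surgery link itself and the upper bound from the quantum $p$-order estimate of Theorem~\ref{thm:porder}. For the lower bound, recall that $\mubar$-invariants do not see framings, so $\mil{L(n_0,\dots,n_d)}=\mil{H^d}$; as recalled in the introduction, the iterated Bing double $H^d$ has Milnor degree $d$, so in particular $\mil{H^d}\ge d$. Since $M(n_0,\dots,n_d)$ is surgery on $L(n_0,\dots,n_d)$, the definition of the Milnor degree of a manifold gives $\mu\ge\mil{L(n_0,\dots,n_d)}\ge d$.

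For the upper bound I would invoke Theorem~\ref{thm:porder} at the common prime $p>3$, which requires the two quantities $b_p$ and $\oh_p$ of $M=M(n_0,\dots,n_d)$. The Betti number is immediate: because $d>1$, untwisted Bing doubling makes every pairwise linking number of $H^d$ vanish, so the linking matrix of $L(n_0,\dots,n_d)$ is diagonal with entries $n_0,\dots,n_d$ and $H_1M\cong\bz_{n_0}\oplus\cdots\oplus\bz_{n_d}$. Since $p$ divides each $n_i$, every cyclic summand contributes a $\bz_p$ to $H_1(M;\bz_p)$, whence $b_p=d+1$.

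The heart of the argument is the identity $\oh_p=d-1$, which I would reach by reducing to the all-zero-framed case and then iterating Lemma~\ref{lem:porder}. The reduction rests on the observation that the twist eigenvalues $\theta_k$ entering the framed quantum integers $(a,k]$ are $p$-th roots of unity, so that altering any framing by a multiple of $p$ leaves the $p$-bracket, and hence $\oh_p$, unchanged. As $p\mid n_i$ for every $i$, this lets me replace each $n_i$ by $0$, giving $\oh_p(M)=\oh_p(M(0,\dots,0))$. The latter manifold is built from $S^3=S^3_{H(0,0)}$, for which $\oh_p=0$, by exactly the $d-1$ Bing doublings along $0$-framed components that produce $H^d$ from the Hopf link; Lemma~\ref{lem:porder} raises $\oh_p$ by $1$ at each step, so $\oh_p(M(0,\dots,0))=d-1$.

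Substituting $b_p=d+1$ and $\oh_p=d-1$ into Theorem~\ref{thm:porder} yields $\mu\le(b_p+\oh_p)/(b_p-\oh_p)=2d/2=d$, which combined with the lower bound gives $\mu=d$. I expect the main obstacle to be the framing reduction in the $\oh_p$ computation: one must verify rigorously that a framing change by $p$ fixes the $p$-bracket, essentially by rerunning the coloring-by-$\omega$ and symmetry-principle manipulations from the proof of Lemma~\ref{lem:porder} with $p$-divisible rather than zero framings and checking that the resulting twist factors $\theta_k^{\,n_i}$ are all trivial because $\theta_k^{\,p}=1$.
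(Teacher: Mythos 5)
Your proposal is correct and follows essentially the same route as the paper: lower bound from $\mil{H^d}=d$, upper bound from Theorem~\ref{thm:porder} with $b_p=d+1$ and $\oh_p=d-1$, the latter obtained by reducing all framings to $0$ mod $p$ and iterating Lemma~\ref{lem:porder} starting from $M(0,0)=S^3$. The framing reduction you flag as the main obstacle is exactly the paper's one-line justification that the framed quantum integers $(a,k]$ depend only on $a \pmod p$, so no further verification is needed.
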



\proof
It is well known that $\mil {H^d} =d$ (see \cite[Theorem 8.1]{C4}) and so 
$$
\mil {M_d(n_0,\dots,n_d)}\ge d.  
$$

For the reverse inequality, we apply Theorem~\ref{thm:porder}.  Clearly $M(n_0,\dots,n_d)$ has mod $p$ first Betti number $d+1$, the number of components in $H^d$, since the pairwise linking numbers of $H^d$ vanish when $d>1$. Furthermore, its $p$-order is the same as the $p$-order of the corresponding $0$-surgery $M(0,\dots,0)$, since the framed quantum integers $(a,k]$ depend only on $a\pmod p$.  But $M(0,0)=S^3$ has $p$-order $0$, and so $M(n_0,\dots,n_d)$ has $p$-order $d-1$, by repeated application of Lemma~\ref{lem:porder}.  Thus  
$$
\mil {M(n_0,\dots,n_d)} \ \le \ \frac{(d+1)+(d-1)}{(d+1)-(d-1)} = d.
$$  
This completes the proof.  \qed

\begin{remark}
\textup{This proposition fails in general when $d=1$.   Indeed $M(p,q)$ is $(p,q)$-surgery on $H$, or equivalently $p-(1/q)$ surgery on the unknot, which is just the lens space $L(pq-1,q)$.  By the calculations in section 1 we see, for example, that $M(5,5)=L(24,5)$ has Milnor degree greater than 1, whereas $M(7,7)=L(48,7)$ has Milnor degree 1.}
\end{remark}

\part{Realization}

\begin{theorem}\label{thm:realization2}
For any integers $b\ge0$ and $d\ge1$ there exist $3$-manifolds with first Betti number $b$ and Milnor degree $d$.
\end{theorem}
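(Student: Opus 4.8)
The plan is to treat $d=1$ and $d\ge2$ separately, and for $d\ge2$ to split further according to whether $b\le d+1$ or $b\ge d+2$. The unifying observation is that when $b\in\{0,1\}$ one cannot use a manifold with torsion-free homology, since Corollary~\ref{cor:realization1} gives $\mil{\bz^0}=\mil{\bz^1}=\emptyset$; so a construction that produces torsion is forced, and the Bing-double surgeries of Proposition~\ref{prop:bing} supply precisely this.

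First suppose $d\ge2$. For $0\le b\le d+1$ I would take the framed link $L(n_0,\dots,n_d)$ with exactly $b$ of the framings equal to $0$ and the remaining $d+1-b$ equal to a fixed prime $p>3$. Since the pairwise linking numbers of $H^d$ vanish for $d>1$, its linking matrix is the diagonal matrix with diagonal $n_0,\dots,n_d$, so $H_1(M(n_0,\dots,n_d))\cong\bz^b\oplus\bz_p^{d+1-b}$ has first Betti number $b$; and as $0$ is divisible by every prime, the framings share the common factor $p$, whence Proposition~\ref{prop:bing} gives Milnor degree exactly $d$. This handles $b=0$ and $b=1$ and, crucially, the pairs $b=2$ with $d=2,4,6$ that were excluded from Theorem~\ref{thm:realization1}, since for those $2\le d+1$. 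For $b\ge d+2$---which forces $b\ge4$, and in particular $b\ge3$---I would instead apply Theorem~\ref{thm:realization1} with $A=\bz^b$: the rank being at least $3$ there are no excluded degrees, and a manifold with $H_1\cong\bz^b$ and Milnor degree $d$ results. Together these cover every pair $(b,d)$ with $d\ge2$.

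For $d=1$, Proposition~\ref{prop:bing} is unavailable, since it fails in degree one (as its following remark observes), so I would instead use the linking-form characterization of Corollary~\ref{cor:deg1}: a $3$-manifold has Milnor degree one exactly when its linking form is non-semisimple. Taking $M=(\#^b S^1\times S^2)\,\#\,L(5,2)$ gives first Betti number $b$, and because the linking form of a connected sum is the orthogonal sum of those of the summands---here the trivial form of $\#^b S^1\times S^2$ together with the non-semisimple form $(2/5)$ of $L(5,2)$ recorded in Table~1---the linking form of $M$ is non-semisimple, so $\mil M=1$.

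The individual steps are short once Proposition~\ref{prop:bing} and Theorem~\ref{thm:realization1} are granted; the real work is the bookkeeping that fixes the first Betti number and the degree simultaneously. The main obstacle I expect lies in the low-Betti regime $b\in\{0,1\}$ together with the exceptional pairs $b=2$, $d\in\{2,4,6\}$: none of these can come from torsion-free homology, and it is exactly the quantum upper bound underlying Proposition~\ref{prop:bing} that pins the degree down to $d$ once the torsion summand $\bz_p^{d+1-b}$ is introduced. The one point needing genuine care is to confirm that the mixed framings (some equal to $0$, the rest to $p$) still satisfy the hypotheses behind Proposition~\ref{prop:bing}---a common prime factor $p>3$, mod-$p$ first Betti number $d+1$, and the same $p$-order as the all-zero framing.
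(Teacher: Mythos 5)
Your proof is correct, and in the decisive cases it is the paper's own argument. The $d=1$ construction $L(5,2)\,\#\,(\#^b\,S^1\times S^2)$ is exactly what the paper uses, and for $d\ge2$ with $b\le d+1$ your single copy of $H^d$ with $b$ zero framings and $d+1-b$ framings equal to $p$ is (up to the choice $p=5$) the paper's manifold $N_b$; the paper likewise reads the hypothesis of Proposition~\ref{prop:bing} as ``all framings divisible by $p$,'' so that the linking matrix vanishes mod $p$, giving $b_p=d+1$, and the $p$-order agrees with that of the all-zero surgery --- precisely the three points you rightly single out as needing verification. The one place you diverge is the regime $b\ge d+2$. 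There the paper stays entirely within quantum topology: it writes $b=q(d+1)+r$, forms the connected sum of $q$ copies of $M(0,\dots,0)$ with one $N_r$, and bounds the degree above by Theorem~\ref{thm:porder} using the behavior of $b_5$ and $\oh_5$ under connected sum. You instead invoke Theorem~\ref{thm:realization1} with $A=\bz^b$ (hence ultimately Corollary~\ref{cor:realization1} and Lemma~\ref{lem:numbertheory}), which is legitimate since $b\ge4$ avoids the rank-two exceptions. Your route is shorter at that point and makes clear that large Betti number needs no torsion at all, but it imports the non-constructive counting of Milnor invariants from Appendix~B and the nilpotent-quotient machinery of Section~2, whereas the paper's connected-sum computation keeps the entire proof explicit and self-contained within the quantum-order framework of Section~3.
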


\begin{proof}
The connected sum of $b$ copies of $S^1\times S^2$ (which is zero surgery on a $b$-component unlink) has infinite Milnor degree, taking care of the case when $d$ is infinite (cf.\ the introduction).  So assume $d$ is finite.

First consider $d>1$.  If $b=0$, simply apply the previous proposition with all $n_i>0$.  For example the rational homology spheres $M(5,\dots,5)$ realize all possible Milnor degrees $>1$.  If $b>0$, then write $b=q(d+1)+r$ with $0\le r\le d$, and define $M_d=M(0,\dots,0)$ and $N_r=M(0,\dots,0,5,\dots,5)$, each manifold with $d+1$ entries (i.e.\ surgery on $H^d$) with the latter having $r$ zeros followed by $d+1-r$ fives.  Now set 
$$
M = M_d\ \#\ \cdots\ \#\ M_d\ \#\ N_r
$$
with $q$ copies of $M_d$, which clearly has first Betti number $b$.   

Since $M$ is surgery on a disjoint union $L$ of copies of $H^d$, and $\mil{L}=d$, it has Milnor degree $\mil{M}\ge d$.   Evidently 
$
b_5(M) = (q+1)(d+1)
$ 
and arguing as in Proposition~\ref{prop:bing} (noting that $\oh_p$ multiplies under connected sums) we have $\oh_5(M) = (q+1)(d-1) < b_5(M)$.  Thus 
$$
\mil{M}\le \frac{2d(q+1)}{2(q+1)} = d
$$
by Theorem 4.1, and so in fact $\mil{M}=d$.

The case $d=1$ is handled by a separate argument.  When $b=0$, simply take a lens space of degree 1, for example $L(5,2)$.  For $b>0$, consider
$$
M = L(5,2)\ \#\ S^1\times S^2\ \#\ \cdots\ \#\  S^1\times S^2
$$
with $b$ copies of $S^1\times S^2$, which clearly has first Betti number $b$.  Also, $M$ has the same torsion linking form as $L(5,2)$, and so $\mil{M}=1$ by Corollary~\ref{cor:deg1}. 
\end{proof}

\appendix

\section{Proof that the Milnor degree is homological}  

It suffices to prove the following:

\begin{lemma*} Let $L$ be a framed link in an integral homology sphere $\Sigma$, and $\Sigma'$ be any other integral homology sphere.  Then there exists a framed link $L'$ in $\Sigma'$ such that \break $(1)$ $\mil {L'} = \mil L$ \ and \ $(2)$ $\Sigma'_{L'} \cong \Sigma_L$.
\end{lemma*}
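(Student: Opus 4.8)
The plan is to realize the target manifold $\Sigma_L$ as surgery on a link of the form $L'=D\cup L$ in $\Sigma'$, where $D$ is an auxiliary framed link chosen so that: (i) surgery on $D$ alone converts $\Sigma'$ into $\Sigma$, i.e.\ $\Sigma'_D\cong\Sigma$, after which $L$ is reinserted into the region $\Sigma'-D=\Sigma-(\text{surgery cores of }D)$ where it originally lived; and (ii) $D$ is a $\pm1$-framed \emph{boundary link that is null with respect to $L$}, meaning its components bound disjoint Seifert surfaces in the complement $\Sigma'-L$ (so in particular all pairwise and higher linkings among the $D_j$ and between $D$ and $L$ vanish). Condition (i) forces $\Sigma'_{L'}=\Sigma'_{D\cup L}=(\Sigma'_D)_L\cong\Sigma_L$, which is conclusion $(2)$ for free; all the work goes into arranging (ii) so that the Milnor degree is unchanged, yielding conclusion $(1)$.

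By Lemma~\ref{lem:milnor1} the Milnor degree of any link is read off the lower central series: it is the largest $k$ for which the longitudes, written as Milnor words in the free group on the meridians, lie in $F_k$. Thus $\mil{L'}$ depends only on the nilpotent quotients of $\pi_1(\Sigma'-L')$ together with the peripheral (meridian, longitude) data, and likewise for $\mil L$ in $\Sigma$. So I would reduce the problem to the purely algebraic assertion that appending $D$ and changing the ambient sphere by surgery on $D$ leaves the nilpotent peripheral data of $L$ untouched while introducing no new low-degree invariants.

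For the invariance step, write $W=\pi_1(\Sigma'-(L\cup D))$, with meridians $m_i,c_j$ and longitudes $\ell_i,\lambda_j$ for the $L$- and $D$-components. Filling the $D$-tori back trivially gives $\pi_1(\Sigma'-L)=W/\langle\langle c_j\rangle\rangle$, whereas the $\pm1$-surgery producing $\Sigma=\Sigma'_D$ gives $\pi_1(\Sigma-L)=W/\langle\langle \lambda_j c_j^{\pm1}\rangle\rangle$. Because $D$ is a boundary link null with respect to $L$, each $\lambda_j$ bounds a surface disjoint from $L$ and from the other components, so by the standard boundary-link property (via Stallings' theorem \cite{St}) $\lambda_j$ lies in every term $W_k$ of the lower central series. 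Hence the two relation sets $\{c_j\}$ and $\{\lambda_j c_j^{\pm1}\}$ agree modulo each $W_k$, so the projections induce the \emph{same} isomorphism type on every nilpotent quotient, compatibly with the meridians and longitudes of $L$; the Milnor words of $L$ are therefore identical in the two ambient spheres and $\mil L$ agrees. Moreover, since the $\lambda_j$ are infinitely deep, every $\bar\mu$-invariant of $L\cup D$ whose index involves a $D$-component vanishes to all orders, so the first nonvanishing invariant of $L'=L\cup D$ is one of $L$ itself. This gives $\mil{L'}=\mil L$, which is conclusion $(1)$.

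The main obstacle is the existence of the auxiliary link $D$: one must realize the prescribed change of ambient homology sphere $\Sigma'\rightsquigarrow\Sigma$ by surgery on a $\pm1$-framed boundary link that is simultaneously null in the complement of $L$. That one homology sphere is obtained from another by surgery on a boundary link is classical; the delicate point is forcing the Seifert surfaces to avoid $L$ (equivalently, killing the linking of $D$ with $L$ at every order) while keeping the surgered manifold exactly $\Sigma$. This is precisely the type of null-surgery realization carried out by Habegger and Orr, so I would construct $D$ by following the argument in the proof of 6.1 in \cite{HO}, after which the reductions above complete the proof.
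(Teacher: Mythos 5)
Your reduction of conclusion (1) to the stated properties of $D$ is not the problem; the problem is that those properties do not follow from your hypothesis that the components of $D$ bound disjoint Seifert surfaces in $\Sigma'-L$. That hypothesis puts each longitude $\lambda_j$ into $W_2=[W,W]$, but not into the deeper terms $W_k$: the standard argument pushing the longitudes of a boundary link into every term of the lower central series uses the map $W\to F$ defined by intersection with the Seifert surfaces, and Stallings' theorem applies only because that map is an isomorphism on $H_1$ --- which requires \emph{every} component of the link to bound such a surface. In your relative situation the $L$-meridians are killed by this map, it is no longer $2$-connected on homology, and the conclusion fails. Concretely, let $L\cup D$ be the Whitehead link with $D$ one of its two components: $D$ bounds a genus-one Seifert surface in the complement of $L$ (tube the obvious disk along $L$), yet $\bar\mu(1122)=\pm1$, so $\lambda_D$ does not lie in every $W_k$ and the ``higher linkings between $D$ and $L$'' emphatically do not vanish. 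Thus even granting the existence of your $D$, the chain of equivalences you draw from it --- identical nilpotent quotients, identical Milnor words for $L$, vanishing of all $\bar\mu$-invariants involving a $D$-index --- collapses at exactly the step that matters. Separately, the existence of $D$ (a $\pm1$-framed boundary link realizing $\Sigma'\rightsquigarrow\Sigma$ with surfaces avoiding $L$) is asserted rather than proved, and the sources you point to do not supply it: Proposition A.8 of \cite{HO}, via Theorem 3.3 of \cite{C5}, gives only the much weaker statement that a single knot $K$ can be isotoped in $\Sigma$, crossing $L$ if necessary, so that $\mil{K\cup L}=\mil{L}$.

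The paper's proof is organized around precisely that weaker input, and it sidesteps all lower-central-series bookkeeping. One reduces to a single $\pm1$-surgery on a knot $K\subset\Sigma$ (any two integral homology spheres are joined by a chain of such surgeries), isotopes $K$ so that $\mil{K\cup L}=\mil{L}$, and then --- this is the device your outline is missing --- sets $L'=K^*\cup L$ where $K^*$ is the \emph{$0$-framed meridian} of $K$ rather than a parallel copy of $K$ itself. Since $K^*$ is isotopic in $\Sigma'=\Sigma_K$ to the core of the surgery solid torus, the complement $\Sigma'-L'$ is literally $\Sigma-(K\cup L)$; because the Milnor degree depends only on the link complement (\cite[Lemma A.3]{HO}), one gets $\mil{L'}=\mil{K\cup L}=\mil{L}$ with no further argument about longitudes or peripheral structures, and conclusion (2) is the standard cancellation $\Sigma_{K\cup K^*}\cong\Sigma$. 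I would restructure your argument along these lines; the boundary-link condition is both stronger than you can arrange and, as the Whitehead link shows, weaker than you need.
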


\vskip-.1in

\begin{proof}  We may assume that $\Sigma'$ is obtained from $\Sigma$ by $\pm1$-surgery on a knot $K$ in $\Sigma$, that is $\Sigma' = \Sigma_K$, since any two integral homology spheres are related by a sequence of such surgeries.

First isotop $K$ in $\Sigma$, possibly crossing $L$ in the process, to arrange that $\mil{K\cup L} = \mil L$.  Proposition A.8 in \cite{HO}, proved using the first author's Theorem 3.3 in \cite{C5}, shows that this can be done.  Now set $L'=K^*\cup L$ where $K^*$ is the 0-framed meridian to $K$ in $\Sigma$.  

The knot $K^*$ is isotopic to $K\subset\Sigma'$ (meaning the core of the surgery) and so $\Sigma' - L' \ = \ \Sigma - (K\cup L)$.
But the Milnor degree of a link depends only on its complement (see e.g. \cite[Lemma A.3]{HO}) and so
$
\mu_{L'} = \mu_{K\cup L} = \mu_L
$
which shows (1).  Also  
$$
\Sigma'_{L'} = \Sigma_{K\cup K^*\cup L} \cong \Sigma_L
$$
since $\Sigma_{K\cup K^*} \cong \Sigma$, which shows (2) and so completes the proof.
\end{proof}

\section{On the number of independent Milnor invariants}  

In this appendix we prove Lemma~\ref{lem:numbertheory}, that the number $M_k^r$ of linearly independent Milnor invariants of degree $k$ for $r$-component links in $S^3$ with vanishing lower degree invariants, given by Orr's formula \cite{O1}
$$
M_k^r \ = \ rN_k^r -N_{k+1}^r \qquad\textup{where}\qquad N_k^r \ = \ \frac1k \, \sum_{d|k}\,\mu(d)\,r^{k/d}\ ,
$$
is {\it positive} except when $(r,k) = (2,2), (2,4)$ or $(2,6)$.  All other values of $(r,k)\ge(2,2)$ will be referred to as {\it generic}.  Note that $M_k^r=0$ in the three {\it exceptional} cases, as seen by computing the first six values of $N_k^2 = 1,2,3,6,9,18$, for $k=2,3,4,5,6,7$.  

We propose to show for generic $(r,k)$ that the numbers $N_k^r$ satisfy the bounds
$$
\frac{r^k}{k+1} \ \le \ N_k^r \ < \ \frac{r^k}k. \put(-250,0){$(*)$}
$$
In fact the upper bound still holds in the exceptional cases, by the computation above, while the lower bound fails.  But this is enough to see that in the generic case
$$
N_{k+1}^r \ < \ \frac{r^{k+1}}{k+1} \ = \ r\frac{r^k}{k+1} \ \le \ rN_k^r
$$
and so $M_k^r>0$ as desired.  

It remains to establish the bounds $(*)$.  The argument is based on the following:

\begin{lemma*}
For any integers $r,k\ge2$ set
$$
\calp(r^k) \ = \ \sum_{p|k} \,r^{k/p}
$$
where the sum is over the distinct prime divisors $p$ of $k$.  Then $r^k>\calp(r^k)$.  In fact, $r^k\ge (k+1)\calp(r^k)$ except when $(r,k) = (2,2), (2,4)$ or $(2,6)$.
\end{lemma*}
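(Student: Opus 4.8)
The plan is to prove the two assertions separately: first the weak inequality $r^k>\calp(r^k)$, and then the sharp bound $r^k\ge(k+1)\calp(r^k)$ for generic $(r,k)$, extracting the three exceptions along the way.

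For the weak inequality I would use a single structural observation: as $p$ ranges over the distinct prime divisors of $k$, the exponents $k/p$ are \emph{distinct} positive integers, each at most $\lfloor k/2\rfloor$ since $p\ge2$. Consequently $\calp(r^k)$ is dominated by the full geometric sum,
$$
\calp(r^k)\ =\ \sum_{p\mid k}r^{k/p}\ \le\ \sum_{j=1}^{\lfloor k/2\rfloor}r^{j}\ =\ \frac{r^{\lfloor k/2\rfloor+1}-r}{r-1}.
$$
Because $r\ge2$ gives $r-1\ge1$, the right side is at most $r^{\lfloor k/2\rfloor+1}-r<r^{\lfloor k/2\rfloor+1}\le r^k$, the final step using $\lfloor k/2\rfloor+1\le k$ for $k\ge2$. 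This settles the first claim with no case analysis.

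For the sharp bound, rewrite the target as $\sum_{p\mid k}r^{-k(p-1)/p}\le 1/(k+1)$, after dividing by $r^k$. Let $p_0$ be the smallest prime divisor of $k$ and let $\omega(k)$ denote the number of distinct prime divisors of $k$. Since $(p-1)/p$ increases with $p$, the largest summand occurs at $p=p_0$, so the sum is at most $\omega(k)\,r^{-k(p_0-1)/p_0}$, and everything reduces to the clean sufficient condition
$$
r^{\,k(1-1/p_0)}\ \ge\ (k+1)\,\omega(k).
$$
The left side increases with $r$ while the right side does not, so the binding case is $r=2$: for even $k$ this reads $2^{k/2}\ge(k+1)\omega(k)$ (as $p_0=2$), and for odd $k$ the stronger margin $p_0\ge3$ yields the easier $2^{2k/3}\ge(k+1)\omega(k)$. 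To control $\omega(k)$ I would use that the product of the distinct primes dividing $k$ is at most $k$, hence $\omega(k)\le\log_2 k$; together with the exponential growth of the left side this settles all but finitely many $k$, and the small remaining values are checked directly using the \emph{exact} value of $\omega(k)$. The role of the exact value is essential in the transition range: for a prime power such as $k=8$ one has $\omega(k)=1$, which verifies the bound ($2^4=16\ge9$) even though the crude estimate $\omega(k)\le\log_2 k=3$ would not.

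Finally, for $r\ge3$ the displayed condition holds in every case: the three borderline values $k=2,4,6$ are checked directly (e.g.\ $3^3=27\ge 7\cdot2$ for $(r,k)=(3,6)$), and all other cases follow from the $r=2$ analysis since the left side only grows with $r$. To see that the three listed pairs are genuine exceptions, one computes $\calp(2^2)=2$, $\calp(2^4)=4$ and $\calp(2^6)=2^3+2^2=12$, giving $2^2=4<6=3\calp(2^2)$, $2^4=16<20=5\calp(2^4)$ and $2^6=64<84=7\calp(2^6)$. The main obstacle is the finite verification in the transition range, where the exponential bound has not yet taken over: here one must argue with the true values of $\omega(k)$ (small for prime powers, and forced to be large only when $k$ is itself large, by the primorial bound) to confirm that precisely $(2,2)$, $(2,4)$ and $(2,6)$ survive as exceptions.
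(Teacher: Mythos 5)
Your argument is correct and follows essentially the same route as the paper's: both proofs bound $\calp(r^k)$ by the number of prime divisors of $k$ times the largest term, reduce the sharp inequality to a condition of the shape $r^{k/2}\gtrsim (k+1)\omega(k)$, dispatch large $k$ with a crude bound on $\omega(k)$ (you use $\omega(k)\le\log_2 k$, the paper uses $\omega(k)\le k/5$ for $k\ge7$), and settle the remaining small $k$ by direct computation, isolating the three exceptional pairs. The only cosmetic differences are your geometric-series bound for the weak inequality in place of the paper's $\calp(r^k)\le\omega r^{k/2}$, and your retention of the sharper exponent $k(1-1/p_0)$.
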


\begin{proof}
First note that each term in $\calp(r^k)$ is bounded above by $r^{k/2}$, and so
$$
\calp(r^k) \ \le \ \omega r^{k/2}
$$
where $\omega$ is the number of distinct prime factors of $k$.  Therefore, to prove the first inequality it suffices to show $r^{k/2}  > \omega$, and this is easy:  Let $p_i$ denote the $i^\textup{th}$ prime, starting with $p_1=2$.  Then $r^{k/2}\ge r^{p_2\cdots p_{\omega}} \ge r^{p_{\omega}} > r^{\omega} > \omega$.

To prove the second inequality it would suffice as above to show $$r^{k/2} > (k+1)\omega.$$  This is in fact true for any $(r,k)$ with $k\ge7$.  For in this case $\omega\le k/5$, and so it is enough to verify the inequality $2^{k/2} > (k+1)k/5$ for $k\ge7$, which is straightforward by comparing derivatives of both sides.  

For $k\le6$, it is easiest to test the desired inequality $r^k\ge (k+1)\calp(r^k)$ by direct calculation.  In particular for $k=2,3,4,5$ and $6$, in turn, it reduces to $r\ge3$, $r^2\ge4$, $r^2\ge5$, $r^4\ge6$ and $r^4\ge7+7r$, and therefore fails only for $(r,k) = (2,2), (2,4)$ or $(2,6)$.  This completes the proof.
\end{proof}

We now prove $(*)$.  For any integer $n$, let $\omega(n)$ denote the number of distinct prime factors of $n$.  Fix $r$ and $k$, and set $\omega=\omega(k)$ as above.  Then $N_k^r$ can be written as an alternating sum
$$
N_k^r \ = \  \frac1k \, \sum_{s=0}^\omega \ (-1)^s\,n_s
$$
where $n_s$ collects the terms $r^{k/d}$ in $N_k^r$ associated with divisors $d$ of $k$ that are products of $s$ distinct primes, i.e.\
$$
n_s \ = \!\!\! \sum_{\stackrel{\scriptstyle d|k \,,\, \omega(d) = s}{\mu(d)\ne0}} \!\!\! r^{k/d}.
$$
In particular $n_0 = r^k$ and $n_1 = \calp(r^k)$, in the notation of Lemma B, and so for generic $(r,k)$ we have $n_0>(k+1)n_1$, or equivalently $n_0-n_1 \ge kn_0/(k+1)$.  Furthermore, for any $s<\omega$ we have
$$
n_s \ > \!\! \sum_{\stackrel{\scriptstyle d|k \,,\, \omega(d) = s}{\mu(d)\ne0}} \!\!\! \calp(r^{k/d}) \ \ge \ (s+1)n_{s+1} \ \ge \ n_{s+1}
$$
where the second inequality follows from the observation that each term in the sum defining $n_{s+1}$ appears in exactly $s+1$ terms $\calp(r^{k/d})$ in the displayed sum.  Therefore $n_0,\dots,n_{\omega}$ is a {\it decreasing} positive sequence with $N_k^r = (n_0-n_1+-\cdots \pm n_{\omega})/k$, and so
$$
\frac{r^k}{k+1} \ = \ \frac{n_0}{k+1} \ \le \ \frac{n_0-n_1}{k} \ \le \ N_k^r \ < \ \frac{n_0}k \ = \ \frac{r^k}k
$$
as desired.

\vskip-.2in

\bibliographystyle{plain}
\bibliography{mybib}

\begin{thebibliography}{10}

\bibitem{C3}
T.~Cochran.
\newblock Geometric invariants of link cobordism.
\newblock {\em Comment. Math. Helv.}, 60(2):291--311, 1985.

\bibitem{C4}
T.~Cochran.
\newblock Derivatives of links: {M}ilnor's concordance invariants and
  {M}assey's products.
\newblock {\em Mem. Amer. Math. Soc.}, 84(427), 1990.

\bibitem{C5}
T.~Cochran.
\newblock $k$-{C}obordism for links in ${S}^3$.
\newblock {\em Trans. Amer. Math. Soc.}, 327(2):641--654, 1991.

\bibitem{CGO}
T.~Cochran, A.~Gerges, and K.~Orr.
\newblock Dehn surgery equivalence relations on 3-manifolds.
\newblock {\em Math. Proc. Cambridge Philos. Soc.}, 131(1):97--127, 2001.

\bibitem{CM1}
T.~Cochran and P.~Melvin.
\newblock Quantum cyclotomic orders of 3-manifolds.
\newblock {\em Topology}, 40(1):95--125, 2001.

\bibitem{CO3}
T.~Cochran and K.~Orr.
\newblock Homology cobordism and generalizations of {M}ilnor's invariants.
\newblock {\em Journal of Knot Theory and its Ramifications}, 8:429--436, 1999.

\bibitem{Du}
A.~Durfee.
\newblock Bilinear and quadratic forms on torsion modules.
\newblock {\em Advances in Math.}, 25:133--164, 1977.

\bibitem{FT2}
M.~Freedman and P.~Teichner.
\newblock 4-manifold topology {II}: Dwyer's filtration and surgery kernels.
\newblock {\em Inventiones Math.}, 122:531--557, 1995.

\bibitem{HO}
N.~Habegger and K.~Orr.
\newblock Milnor link invariants and quantum {$3$}-manifold invariants.
\newblock {\em Comment. Math. Helv.}, 74(2):322--344, 1999.

\bibitem{IR}
K.~Ireland and M.~Rosen.
\newblock {\em A Classical Introduction to Modern Number Theory}, volume~84 of
  {\em Graduate Texts in Mathematics}.
\newblock Springer-Verlag, New York, 1990.

\bibitem{kk}
A.~Kawauchi and S.~Kojima.
\newblock Algebraic classification of linking pairings on {$3$}-manifolds.
\newblock {\em Math. Ann.}, 253(1):29--42, 1980.

\bibitem{K}
R.~Kirby.
\newblock A calculus for framed links in {$S\sp{3}$}.
\newblock {\em Invent. Math.}, 45(1):35--56, 1978.

\bibitem{KM}
R.~Kirby and P.~Melvin.
\newblock The {$3$}-manifold invariants of {W}itten and {R}eshetikhin-{T}uraev
  for {${\rm sl}(2,{\bf C})$}.
\newblock {\em Invent. Math.}, 105(3):473--545, 1991.

\bibitem{KM1}
R.~Kirby and P.~Melvin.
\newblock Local surgery formulas for quantum invariants and the {A}rf
  invariant.
\newblock {\em Geom. \& Top. Monographs}, 7:213--233, 2004.

\bibitem{kp}
M.~Kneser and D.~Puppe.
\newblock Quadratische {F}ormen und {V}erschlingungsinvarianten von {K}noten.
\newblock {\em Math. Z.}, 58:376--384, 1953.

\bibitem{Les}
C.~Lescop.
\newblock {\em Global surgery formula for the {C}asson-{W}alker invariant},
  volume 140 of {\em Annals of Mathematics Studies}.
\newblock Princeton University Press, Princeton, NJ, 1996.

\bibitem{MKS}
W.~Magnus, A.~Karrass, and D.~Solitar.
\newblock {\em Combinatorial Group Theory: Presentations of Groups in Terms of
  Generators and Relations}.
\newblock Dover Publications, New York, Second revised edition 1976.

\bibitem{M1}
J.~Milnor.
\newblock Link groups.
\newblock {\em Ann. of Math. (2)}, 59:177--195, 1954.

\bibitem{M2}
J.~Milnor.
\newblock Isotopy of links. {A}lgebraic geometry and topology.
\newblock In {\em A symposium in honor of S. Lefschetz}, pages 280--306.
  Princeton University Press, Princeton, N. J., 1957.

\bibitem{Mo}
L.~Moser.
\newblock Elementary surgery along a torus knot.
\newblock {\em Pacific J. Math.}, 38:737--745, 1971.

\bibitem{O1}
K.~Orr.
\newblock Homotopy invariants of links.
\newblock {\em Invent. Math.}, 95(2):379--394, 1989.

\bibitem{Se}
H.~Seifert.
\newblock Die {V}erschlingungsinvarianten der zyclischen
  {K}noten\"uberlagerungen.
\newblock {\em Hamburg. Math. Abh.}, 11:84--101, 1936.

\bibitem{St}
J.~Stallings.
\newblock Homology and central series of groups.
\newblock {\em Journal Algebra}, 2:170--181, 1965.

\bibitem{Wa2}
C.~T.~C. Wall.
\newblock Quadratic forms on finite groups, and related topics.
\newblock {\em Topology}, 2:281--298, 1963.

\bibitem{Wa3}
C.~T.~C. Wall.
\newblock Quadratic forms on finite groups ii.
\newblock {\em Bull. London Math. Soc.}, 4:156--160, 1972.

\bibitem{Wo}
S.~Wolfram.
\newblock {\em The {M}athematica book}.
\newblock Wolfram Media, Inc., Champaign, IL, third edition, 1996.

\end{thebibliography}
\end{document}